\documentclass{amsart}
\usepackage[utf8]{inputenc}
\usepackage{amsmath, amssymb, amsthm, amsfonts, mathtools, array, xfrac, float, multirow, indentfirst, url, enumerate, comment, afterpage, pifont, makecell, xcolor, hyperref,comment}
\usepackage[justification=centering]{caption}
\usepackage{dsfont}
\usepackage[backend=bibtex, sorting=none, bibstyle=alphabetic, citestyle=alphabetic, sorting=nyt,maxbibnames=99]{biblatex}

\bibliography{references.bib}

\newtheorem{theorem}{Theorem}
\newtheorem{lemma}[theorem]{Lemma}
\newtheorem{proposition}[theorem]{Proposition}
\newtheorem{corollary}[theorem]{Corollary}

\theoremstyle{definition}
\newtheorem*{remark}{Remark}

\renewcommand{\Re}{\textrm{Re}}

\begin{document}

\title{A method for verifying the generalized Riemann hypothesis}

\author[G.\ Hiary, S.\ Ireland, \and M.\ Kyi]{Ghaith Hiary, Summer Ireland, Megan Kyi}
\address{
    GH: Department of Mathematics, The Ohio State University, 231 West 18th
    Ave, Columbus, OH 43210, USA
}
\email{hiary.1@osu.edu}
\address{
    SI: Department of Mathematics, The Ohio State University, 231 West 18th
    Ave, Columbus, OH 43210, USA
}
\email{ireland.118@buckeyemail.osu.edu}
\address{
    MK: Oberlin College, 135 West Lorain Street, Oberlin, OH 44074-1081, USA
}
\email{mkyi@oberlin.edu}

\subjclass[2020]{Primary: 11M06, 11Y35.}
\keywords{Riemann hypothesis, Turing test, Riemann zeta function, L-functions.}

\begin{abstract}
    Riemann numerically approximated at least three zeta zeros. According to Edwards, Riemann even took steps to verify that the lowest zero he computed was indeed the first zeta zero. This approach to verification is developed, improved, and generalized to a large class of $L$-functions. Results of numerical calculations demonstrating the efficacy of the method are presented.
\end{abstract}
\maketitle
\section{Introduction}

Let $s=\sigma+it$ be a complex variable, where $\sigma$ and $t$ are real numbers. 
The Riemann zeta function $\zeta(s)$ is defined by the Dirichlet series
\begin{equation}\label{dirichlet series}
\zeta(s)=\sum_{n\ge 1} \frac{1}{n^s},
\end{equation}
which converges absolutely in the half-plane $\sigma >1$.
Zeta can be analytically continued to the entire complex plane except for a simple pole at $s=1$, and has zeros (i.e., roots) at $s=-2,-4,-6,\ldots$, which are called the trivial zeros. Zeta also has an infinite number of nontrivial zeros $\rho=\beta+i\gamma$ in the critical strip $0< \sigma< 1$, none of which is real. We call $|\gamma|$ the height of $\rho$ and order the $\rho$'s by increasing height.
The trivial and nontrivial zeros account for all the zeta zeros.
The Riemann Hypothesis (RH) is that all the $\rho$'s are on the critical line $\sigma=1/2$, 
or equivalently that $\beta=1/2$ for all $\rho$. 

It is frequently asserted that Riemann numerically approximated the first few zeta zeros by hand, citing unpublished notes by Riemann. See in particular Edwards~\cite[\S{7.6}]{edwards_riemann_2001} as well as the Clay Mathematics Institute page \cite{clay}.
As Figure~\ref{riemann_computation} shows, 
Riemann numerically approximated three zeta zeros on the critical line, 
corresponding to those with ordinates (i.e., imaginary parts)
\begin{equation*}
\begin{split}
    \gamma_1= 14.1347251417\ldots,\\ \gamma_2=21.0220396387\ldots,\\ \gamma_3=25.0108575801\ldots.
\end{split}
\end{equation*}
Riemann approximated $\gamma/(2\pi)$ rather than $\gamma$ as the former 
quantity appeared naturally in various formulas.

The closest approximation of $\gamma_1/(2\pi)$ that we found in Riemann's notes was $2.250466$, so that $\gamma_1$ is $\approx 14.140095$.\footnote{This differs from what is stated in \cite[159]{edwards_riemann_2001} which gave the approximation $14.1386$.} For $\gamma_2$ and $\gamma_3$, Riemann computed the approximations $3.287195$ and 
$4.0287$, respectively. Both of these approximations were 
noticeably far from the true values $3.34576152\ldots$ and $3.98060161\ldots$, and 
as can be seen in Figure~\ref{riemann_computation}, Riemann had other intermediate approximations that were 
slightly better. 
Nevertheless, Riemann's approximation of $\gamma_1$, which was
long unknown to the outside world, 
remained closest to the true value of $\gamma_1$ for nearly five decades.\footnote{As far as we can tell, the first 
circulated approximation of $\gamma_1$ was in 1887 by 
Stieltjes~\cite[450]{bailaud_bourget_1905} who gave the approximation $14.5$.
Eight years later, Gram~\cite{gram_1895}
gave the approximation $14.135$, which Gram~\cite{gram_1903} improved to $14.13472$ 
in 1903. Around the same time, Lindel\"of~\cite{lindelof_1903} devised a different method 
to approximate the $\rho$'s and
proved that $14\le \gamma_1\le 14.25$.}

According to Edwards~\cite[\S{7.6}]{edwards_riemann_2001}, Riemann even attempted 
to verify that Riemann's numerical approximation of $1/2+i\gamma_1$ indeed
corresponded to the first zeta zero (i.e., to the $\rho$ with smallest positive ordinate). 
This verification relied on the Hadamard product for $\zeta(s)$ together with
a positivity argument and  a known special value of zeta. 
However, unlike Riemann's method to numerically compute pointwise values of $\zeta(s)$, which 
became a standard method known as the Riemann--Siegel formula~\cite{siegel_1932},
Riemann's approach to  verifying the RH remained little known. 
It is worth remarking, though, 
that Gram~\cite{gram_1895,gram_1903} 
considered the power series of the logarithm of the Hadamard product, like Riemann did, 
but often appeared to assume the RH. One may also 
compare the Riemann approach to verification with the Li 
criterion \cite{li_1997} for the RH equivalence. 

After Riemann's 1859 paper, various efficient 
methods for verifying the RH were derived. 
Backlund~\cite{backlund_1911,backlund_1916} 
devised a verification method that relied on
    a clever application of 
    the argument principle from complex analysis 
 together with the Euler--Maclaurin summation for $\zeta(s)$.
    This was eventually surpassed by 
    a highly efficient method due to Turing~\cite{turing_1953}, 
   which has since become the standard method for verifying the RH, 
     \textit{provided} one is high enough on the critical line.
     
\begin{figure}[htbp]
    \includegraphics[scale = 0.3]{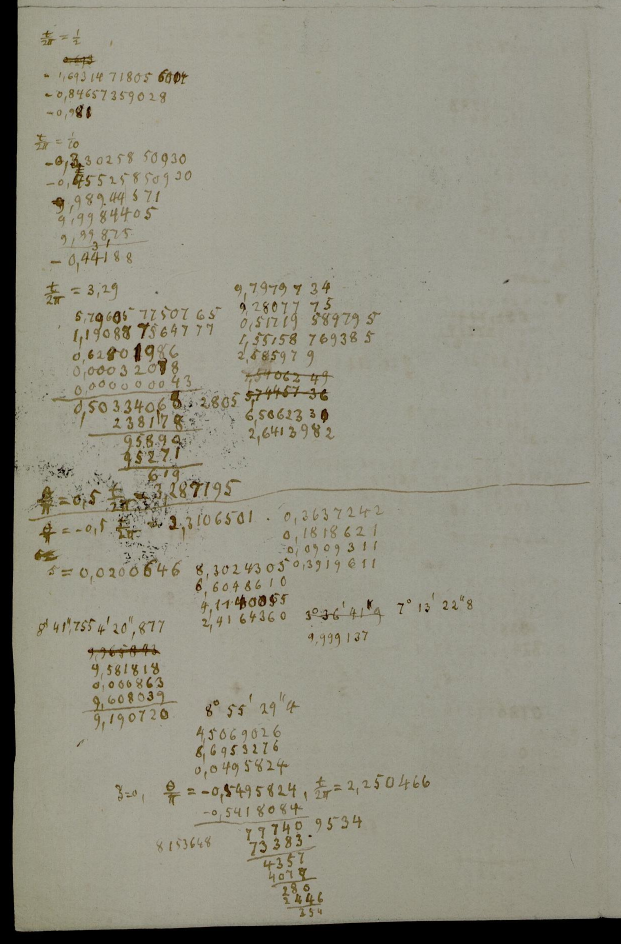}
    \includegraphics[scale = 0.3]{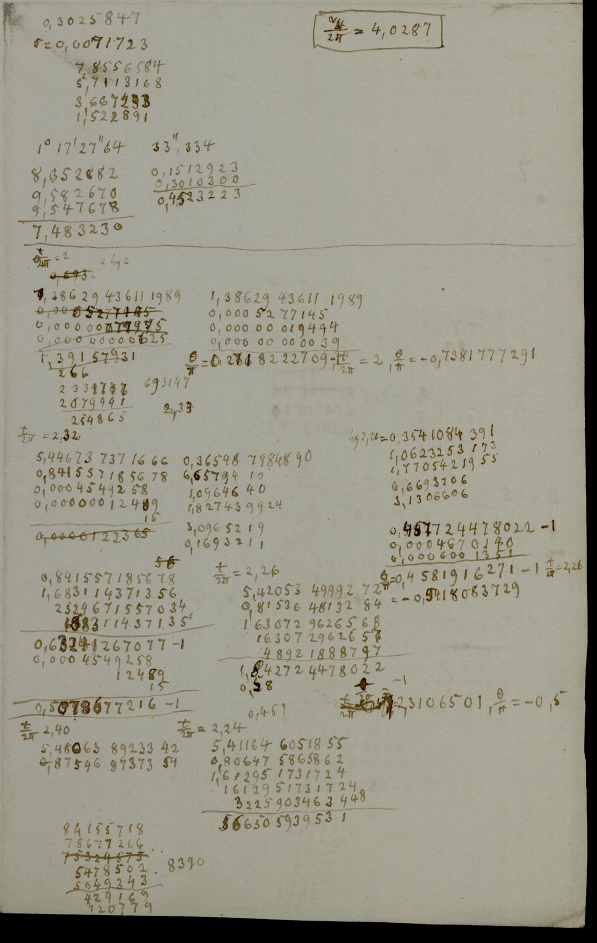}
    \includegraphics[scale = 0.3]{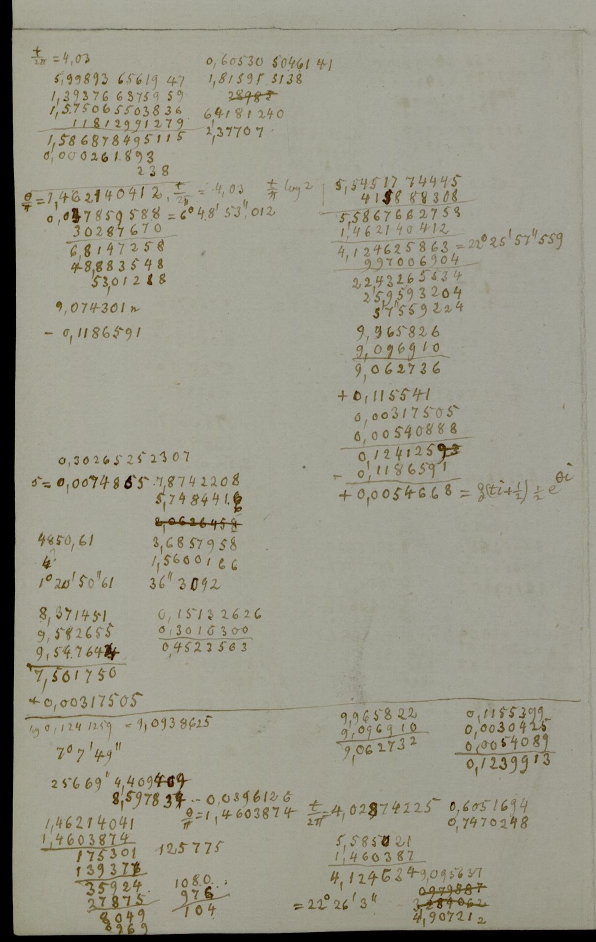}
    \caption{\small Riemann's approximation of the first three zeta zeros. Reproduced from \cite{riemann3} with permission.}
    \label{riemann_computation}
\end{figure}

In comparison, the Riemann method, cited in Edwards~\cite[\S{7.6}]{edwards_riemann_2001}, 
is time consuming at large heights. It can be expected to require
$\sim \frac{1}{2\pi^2}(t_0\log t_0)^2$ initial zeta zeros to verify the RH up to height $t_0$.
  Nevertheless, this method is reasonably efficient at low heights\footnote{For example, $10$ zeta zeros suffice to verify the RH up to height $\gamma_1$ via this method, 
and $51$ zeta zeros suffice to verify the RH up to height $\gamma_2$.} while 
  offering great simplicity.
    Therefore, it is worthwhile to generalize the Riemann method 
    to families of $L$-functions
    where even the ``first'' zero of $L(s)$ is deeply interesting. Such a generalization  is one of the main goals of this paper. 
    
    Specifically, rather 
    than fall back on a generalized Backlund method, which would require using 
    a numerically-involved application of the argument
    principle, 
    we re-examine and develop the Riemann method in a more general setting. 
    Our generalization
    works naturally with 
    already available databases and software for $L$-functions. 
    Our generalization
     is also simple to derive and justify, 
    requiring a single
    numerical evaluation of a logarithmic derivative of the $L$-function at a special point in the region of absolute convergence.

    Although the main goal of this paper is to provide a simple RH verification for a large class of $L$-functions at low heights, a secondary goal is to improve the Riemann method for zeta 
    so that it functions efficiently at large heights. This results in a conceptually straightforward verification method that can verify the RH over larger windows.  Specifically, given zeros data in a window of size $\tau$ around height $y$, the improved method in Theorem~\ref{zeta counterpart} 
 is expected to succeed in verifying the RH in a window of size $\eta\gg \tau/\sqrt{\log y}$ around $y$.

To illustrate out main results, let us state two
corollaries. 
Corollary~\ref{corollary dirichlet} provides an example of
an RH verification test for low zeros of 
 the Dirichlet $L$-function $L(s,\chi_d)$, where $\chi_d$ is any real primitive character of fundamental discriminant $d$. 
This corollary is obtained 
from Theorem~\ref{main theorem l}, part (i), on setting $\delta=-1$ and $m=1$, 
and using the formula for $w_{1,\delta}$ in Corollary~\ref{k=1 corollary l}. 

Note
the required value of the logarithmic derivative of $L(s,\chi_d)$ 
that appears in Corollary~\ref{corollary dirichlet}
is well inside the region of absolute convergence of $L(s,\chi_d)$. 
So, the required value can be computed easily by truncating the 
Dirichlet series for the logarithmic derivative, even if $d$ is very large.
Lemma~\ref{1st log deriv 1} furnishes an explicit bound on the corresponding truncation error. 

To state the next two corollaries, we will make use of the following the quantity.
$$\iota(\eta):=\min\left(\frac{1}{1+\eta^2}+\frac{2}{4+\eta^2},\, \frac{12}{9+4\eta^2}\right).$$
\begin{corollary}\label{corollary dirichlet}
Let $d$ be a positive fundamental discriminant, $\tau$ be a real positive number, 
and $\mathcal{Z}$ be a set of nonempty disjoint subintervals of the form $[\gamma_-,\gamma_+]\subseteq [0,\tau]$ or of the form $[-\gamma_0,\gamma_0]\subseteq [-\tau,\tau]$. Suppose that $L(1/2+it,\chi_d)$ has a zero of odd multiplicity in each subinterval in $\mathcal{Z}$.
Further, define  
$$ C(\mathcal{Z}):=
\sum_{[\gamma_-,\gamma_+]\in \mathcal{Z}} \frac{12}{9+4\gamma_+^2}
+ \sum_{[-\gamma_0,\gamma_0]\in \mathcal{Z}} \frac{6}{9+4\gamma_0^2}.$$
Let $\lambda_0=0.57721566\ldots$ be the Euler constant. For any real positive number $\eta$, if
$$
2\iota(\eta) +C(\mathcal{Z})
 > \frac{1}{2}\log \frac{d}{\pi e^{\lambda_0}}+ \frac{L'}{L}(2,\chi_d),
$$
then then RH holds for all the nontrivial zeros of $L(s,\chi_d)$ with positive height $\le \eta$.
\end{corollary}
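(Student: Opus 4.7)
My plan is to combine the Hadamard-product identity for $L(s,\chi_d)$ with a positivity argument. Since $d>0$, the real primitive character $\chi_d$ is even, so the completed $L$-function $\xi(s,\chi_d):=(d/\pi)^{s/2}\Gamma(s/2)L(s,\chi_d)$ is entire of order one and satisfies $\xi(s,\chi_d)=\xi(1-s,\chi_d)$. Logarithmic differentiation of the Hadamard factorisation, together with the functional equation (used to eliminate the constant $B$ via the pairing $\rho\leftrightarrow 1-\rho$), produces the standard identity $\xi'/\xi(s)=\sum_\rho 1/(s-\rho)$. Evaluating at $s=2$, grouping conjugate zeros $\rho,\bar\rho$ so that each contribution becomes real and positive, and rewriting the left-hand side via $\xi'/\xi(2)=\tfrac12\log(d/\pi)+\tfrac12(\Gamma'/\Gamma)(1)+(L'/L)(2,\chi_d)$ and $(\Gamma'/\Gamma)(1)=-\lambda_0$ yields the master identity
$$\sum_{\rho}\Re\frac{1}{2-\rho}\;=\;\frac{1}{2}\log\frac{d}{\pi e^{\lambda_0}}+\frac{L'}{L}(2,\chi_d),$$
whose right-hand side is exactly the expression on the right of the hypothesis.

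Next I would lower-bound the contribution of the zeros supplied by $\mathcal Z$ by $C(\mathcal Z)$. For $[\gamma_-,\gamma_+]\in\mathcal Z$, the hypothesised critical-line zero $\tfrac12+i\gamma^*$ together with its complex conjugate contributes $\frac{12}{9+4(\gamma^*)^2}\ge\frac{12}{9+4\gamma_+^2}$; for $[-\gamma_0,\gamma_0]\in\mathcal Z$, the odd-order zero either lies at $t=0$ (contributing $\tfrac23$, arising from a single self-conjugate zero) or appears as a symmetric pair $\pm\gamma^*\ne0$ entirely contained in the interval (contributing $\frac{12}{9+4(\gamma^*)^2}$), so in either case at least $\frac{6}{9+4\gamma_0^2}$. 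Pairwise disjointness of the subintervals in $\mathcal Z$ prevents double counting, and any further (so far unlocated) zeros contribute positively to the left-hand side, so the master identity gives $\tfrac12\log(d/(\pi e^{\lambda_0}))+(L'/L)(2,\chi_d)\ge C(\mathcal Z)$.

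To conclude I would argue by contradiction: suppose some nontrivial zero $\rho'=\beta'+i\gamma'$ has $\beta'\ne\tfrac12$ and $|\gamma'|\le\eta$. Replacing $\rho'$ by $1-\rho'$ if necessary, we may take $\beta'\in(\tfrac12,1)$; then $\{\rho',\overline{\rho'},1-\rho',1-\overline{\rho'}\}$ are four distinct zeros whose combined contribution to the master identity is
$$2\left[\frac{2-\beta'}{(2-\beta')^2+(\gamma')^2}+\frac{1+\beta'}{(1+\beta')^2+(\gamma')^2}\right].$$
Each summand decreases in $(\gamma')^2$, so the bracketed quantity is at least its value at $|\gamma'|=\eta$; the one-variable map $\beta\mapsto\frac{2-\beta}{(2-\beta)^2+\eta^2}+\frac{1+\beta}{(1+\beta)^2+\eta^2}$ on $[\tfrac12,1]$ has $\beta=\tfrac12$ as a critical point by the $\rho\leftrightarrow 1-\rho$ symmetry and attains its minimum at one of the endpoints $\beta=\tfrac12$ or $\beta=1$, producing precisely the two expressions inside $\iota(\eta)$. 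Hence the off-line contribution is at least $2\iota(\eta)$, which combined with the $C(\mathcal Z)$ bound forces
$$\frac{1}{2}\log\frac{d}{\pi e^{\lambda_0}}+\frac{L'}{L}(2,\chi_d)\;\ge\; C(\mathcal Z)+2\iota(\eta),$$
contradicting the strict inequality in the hypothesis. The one step that is more than routine positivity bookkeeping is the endpoint-minimum verification for the $\beta$-function above (essentially a sign analysis of its second derivative, which switches from convex behaviour at small $\eta$ to concave behaviour at large $\eta$); I expect that calculus check to be the main technical point, since it is what ultimately dictates the explicit form of $\iota(\eta)$.
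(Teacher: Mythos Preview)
Your proposal is correct and follows essentially the same route as the paper: the master identity you derive at $s=2$ is precisely the paper's formula $w_{1,-1}=\tfrac12\log(d/\pi e^{\lambda_0})+(L'/L)(2,\chi_d)$ from Corollary~\ref{k=1 corollary l} (specialised to $r=1$, $N=d$, $\mu_1=0$, $\delta=-1$), your lower bound $C(\mathcal Z)$ matches the paper's $C(\mathcal Z,-1)$, and your contradiction argument is exactly Theorem~\ref{main theorem l}\,(i) with $m=1$. The endpoint-minimum claim you flag as the main technical point is indeed the substance of Lemma~\ref{phi lemma}, whose proof in the paper proceeds by factoring $\partial_\beta\phi$ and analysing the sign of a quartic $G(\beta,\eta,x)$ rather than a pure second-derivative check, but your outline captures the phenomenon correctly.
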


    Here, one may think of $\tau$ as the width of the window where known zeros data is available, 
    and of $\eta$ as the width of the window where one would like to verify the RH. The quantity $C(\mathcal{Z})$ is the minimal contribution from known zeros (i.e.\ from supplied zeros data), 
    and $\iota(\eta)$ is the minimal contribution of a hypothetical counter-example of positive height $\le \eta$. The displayed inequality indicates that a contradiction has been reached, so that a hypothetical counter-example of positive height $\le \eta$ cannot exist. 

Similarly, by combining Theorem~\ref{main theorem zeta}, part (i) and Corollary~\ref{v1z}, 
we obtain an RH verification test for zeta zeros at large heights. 
This is stated in Corollary~\ref{corollary zeta}.
But in this case we can improve the basic verification test substantially 
  by considering the behavior 
of $S(u)$, which is the fluctuating part of the counting function of zeta zeros - see \eqref{N formula}.
Specifically, in Theorem~\ref{zeta counterpart}, we incorporate the explicit bounds 
$$|S(u)|\le \ell(u)\qquad \text{and} \qquad \left|\int_{u_0}^{u}S(\nu)\,d\nu\right|\le \ell_1(u),$$
where, according to \cite{trudgian_2014, trudgian_2011}, we may take 
\begin{align}\label{ell ell1}
    \ell(u) &:= 0.112 \log u + 0.278 \log \log u + 2.510, \\ 
    \ell_1(u) &:= 0.059 \log u+2.067, \nonumber
\end{align}
provided $u$ is large enough. ($u>u_0>168\pi$ suffices.)
We note that an explicit bound on $|\int_{u_0}^{u}S(\nu)\,d\nu|$ is a main ingredient in the Turing method as well, but we use this bound differently in our case. 
Also, without additional knowledge or analysis, the explicit bound on $|S(u)|$ is typically 
far more impactful for us than the explicit bound on $|\int_{u_0}^{u}S(\nu)\,d\nu|$.
 
We will make use of the following notation and quantity. Let
$g(s)=(s-1)\zeta(s)$, $\psi_0$ denote the digamma function, and define
$$ \kappa(y,\tau):= \frac{0.57}{\tau y^2}+\frac{3\log 2}{2\pi y}
+\frac{2\log(y/2\pi)}{\pi\tau^3}+\frac{3\log(y/2\pi)}{\pi y}
+\frac{12\, \ell(2y)}{y^2} + \frac{6\, \ell_1(2y)}{\tau^3}.$$ 

\begin{corollary}\label{corollary zeta}
Let $y$ and $\tau$ be real numbers such that $3 <\tau\le y/2$ and $336\pi < y-\tau$. 
Let $\mathcal{Z}$ be a set of nonempty disjoint subintervals of the form $[\gamma_-,\gamma_+]\subseteq [y-\tau,y+\tau]$
such that $y$ does not belong to any of the subintervals in $\mathcal{Z}$.
Suppose that $\zeta(1/2+it)$ has a zero of odd multiplicity in each subinterval in $\mathcal{Z}$.
Further, define 
$$D(\mathcal{Z}):=\sum_{\substack{[\gamma_-,\gamma_+]\in \mathcal{Z}\\ \gamma_->y}} \frac{6}{9+4(\gamma_+-y)^2}
+\sum_{\substack{[\gamma_-,\gamma_+]\in \mathcal{Z}\\ \gamma_+< y}} \frac{6}{9+4(y-\gamma_-)^2}.$$
For any real positive number $\eta\le y$, if
 \begin{align*}
      \iota(\eta) + D(\mathcal{Z}) + \frac{3}{2\pi\tau}\log\frac{y}{2\pi} - \frac{6\,\ell(2y)}{2\tau^2} & - \kappa(y,\tau) >   \\
 &  - \frac{1}{2}\log \pi +\frac{1}{2}\normalfont{\Re}\,\psi_0\left(2-\frac{iy}{2}\right)+
   \normalfont{\Re}\, \frac{g'}{g}(2-iy),
 \end{align*}
then RH holds for all the nontrivial zeros of $\zeta(s)$ with height in $[y-\eta,y+\eta]$. 
\end{corollary}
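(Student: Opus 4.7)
The plan is to apply Theorem~\ref{main theorem zeta}(i) with the kernel supplied by Corollary~\ref{v1z}, and then to use the explicit bounds \eqref{ell ell1} (already packaged into Theorem~\ref{zeta counterpart}) to control the tail of the resulting zero sum. Taking the real part of the logarithmic derivative of the completed zeta function at $s=2-iy$ and isolating the sum over nontrivial zeros $\rho$ produces an identity of the shape
\begin{equation*}
\sum_{\rho}\Re\frac{1}{(2-iy)-\rho}\;=\;\Re\frac{g'}{g}(2-iy)+\tfrac{1}{2}\Re\psi_0\bigl(2-\tfrac{iy}{2}\bigr)-\tfrac{1}{2}\log\pi,
\end{equation*}
in which every summand on the left is nonnegative since $\Re(2-iy)=2$ exceeds $\Re(\rho)$ for every nontrivial $\rho$. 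The goal is to show that, under the displayed inequality, any hypothetical ``extra'' zero of height in $[y-\eta,y+\eta]$ forces a strict lower bound on the left side that already exceeds the right side, yielding a contradiction.

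\textbf{Splitting the zero sum.} The zero sum naturally splits into three pieces. First, the known on-line zeros in $\mathcal{Z}$ contribute at least $D(\mathcal{Z})$: a zero $\rho=1/2+i\gamma$ contributes $\frac{6}{9+4(\gamma-y)^2}$, a kernel that decreases in $|\gamma-y|$, so each subinterval of $\mathcal{Z}$ contributes at least the value at its endpoint farthest from $y$. Second, a putative extra zero $\rho_0=\beta_0+i\gamma_0$ with $|\gamma_0|\in[y-\eta,y+\eta]$ introduces the four zeros $\rho_0,\bar\rho_0,1-\rho_0,1-\bar\rho_0$; summing their kernel values on $s=2-iy$ and minimizing the near-$y$ contributions $\tfrac{2-\beta_0}{(2-\beta_0)^2+(y-|\gamma_0|)^2}+\tfrac{1+\beta_0}{(1+\beta_0)^2+(y-|\gamma_0|)^2}$ over $\beta_0\in[1/2,1]$ recovers $\iota(\eta)$, the two expressions in the $\min$ corresponding to the endpoint values $\beta_0=1$ and $\beta_0=1/2$. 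Third, all remaining zeros -- chiefly those with $|\gamma-y|>\tau$ -- form piece (iii), which requires more delicate control.

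\textbf{Tail estimate and conclusion.} For piece (iii), one writes the sum as a Stieltjes integral against the counting function $N(u)=\overline{N}(u)+S(u)$ and integrates by parts against the symmetric kernel $u\mapsto\frac{6}{9+4(u-y)^2}$. The smooth part $\overline{N}(u)=\frac{u}{2\pi}\log(u/2\pi e)+\tfrac{7}{8}+O(1/u)$ produces the positive main term $\tfrac{3}{2\pi\tau}\log(y/2\pi)$, while the fluctuating part -- controlled by $|S(u)|\le\ell(u)$ and $\bigl|\int_{u_0}^{u}S\bigr|\le\ell_1(u)$ from \eqref{ell ell1} -- yields, after accounting for boundary terms at $u=y\pm\tau$, the correction $-\tfrac{6\ell(2y)}{2\tau^2}$ and the bundled error $\kappa(y,\tau)$; the hypothesis $336\pi<y-\tau$ ensures \eqref{ell ell1} applies throughout the integration. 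Combining the three pieces gives the claimed lower bound on the zero sum, and under the displayed inequality this strictly exceeds the right side of the identity, a contradiction. The main technical obstacle is the tail bookkeeping: matching the constants in $\kappa(y,\tau)$ precisely requires careful tracking of truncation, boundary, and second-order terms, and sign/symmetry conventions must be aligned among the three inputs.
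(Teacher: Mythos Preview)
Your approach is essentially the paper's: the corollary is Theorem~\ref{zeta counterpart}(i) specialized to $x=-1$ and $c=y/2$, together with Corollary~\ref{v1z} for $\Re(v_{1,z})$; with these choices $g_1(\eta,-1)=\iota(\eta)$, $D(\mathcal{Z},z)=D(\mathcal{Z})$, and $r(z,\tau,c)$ collapses to $\tfrac{3}{2\pi\tau}\log(y/2\pi)-\tfrac{6\,\ell(2y)}{2\tau^2}-\kappa(y,\tau)$. One small inconsistency to fix: your displayed identity uses the kernel $\Re\tfrac{1}{(2-iy)-\rho}$, which for $\rho=\beta+i\gamma$ with $\gamma$ near $y$ equals $\tfrac{2-\beta}{(2-\beta)^2+(y+\gamma)^2}$ and does not match the individual-zero contributions you compute afterward; the correct kernel is $\Re\tfrac{1}{\rho-z}$ with $z=-1+iy$ (this is what your $D(\mathcal{Z})$ and $\iota(\eta)$ computations actually use), and Corollary~\ref{v1z} then supplies the right-hand side $-\tfrac12\log\pi+\tfrac12\Re\,\psi_0(2-iy/2)+\Re\,\tfrac{g'}{g}(2-iy)$.
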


\begin{remark}
Here, one may think of $y$ as very large, $\tau$ is large but much smaller than $y$, 
and with $\eta$ somewhat smaller than $\tau$. The expression for $\kappa(y,\tau)$ 
is obtained from Theorem~\ref{zeta counterpart} by setting $x=-1$ and $c=y/2$.
\end{remark}

Like before,
the special value of the logarithmic derivative of $g(s)$ appearing in Corollary~\ref{corollary zeta}
is well inside the region of absolute convergence of $\zeta(s)$.
So this value can be approximated easily and fairly accurately 
via a truncated sum over primes and prime powers
using our Lemma~\ref{1st log deriv zeta}, even at very large heights.

Our main theorems, Theorem~\ref{main theorem l} and Theorem~\ref{main theorem zeta}, additionally
 enable verifying the simplicity of zeros in a given range as well as 
 verifying the completeness of a given list of zeros.
In addition, Theorem~\ref{zeta counterpart} gives 
a counterpart that allows
one to still draw a conclusion 
in some situations where the RH might not be verified using the
Turing method. For example, if the given zeros list is 
incomplete (i.e.\ there is a zero with ordinate in $[y-\tau,y+\tau]$ that is missing from the list), then the 
the Turing method 
might not prove that the zeros list is indeed incomplete. 
In this case, the counterpart in Theorem~\ref{zeta counterpart} 
will typically enable proving that the given zeros list is indeed incomplete.

Lastly, it completely reasonable to expect that a similar method to the one described here may be derived using the framework of the explicit formula \cite[\S{5.5}]{iwaniec_kowalski_2004}. By choosing a suitable test function in the explicit formula, one may even accelerate the convergence of the associated series over the prime and prime powers. 
At the same time though one must ensure, under no assumption, that the individual terms in 
the sum over the zeros appearing in the explicit formula are nonnegative. We favored 
the current derivation due to its simplicity, its historical connection, and because we already have good control 
over the convergence of the said series in the region of absolute convergence. 
Additionally, the current derivation gives us access to several useful exact values and exacting relations as well 
as to long-studied sums in the theory of the Riemann zeta function, which benefits the practicality of our derivation. 
For example, we can directly benefit from exact values of the polygamma function and, if we wish, 
of exact values of $L$-functions at special points such as the class number formula for Dirichlet $L$-functions. 

\vspace{1.5mm}

\noindent
\textbf{Overview}.
In \S{\ref{background}}, we provide background and set up some notation.
In \S{\ref{riemann attempt}}, we outline the Riemann approach to verifying the RH following the description in 
\cite{edwards_riemann_2001}. 
In \S{\ref{lfunction case}}, we generalize the Riemann approach to a class of $L$-functions with real Dirichlet coefficients.
In \S{\ref{zeta case}}, we treat the case of $\zeta$ separately,  
both because $\zeta(s)$ is outside our class of $L$-functions (in view of the pole at $s=1$) and 
because our focus for zeta will be on large heights.
In \S{\ref{improvements}}, we discuss substantial improvements in the case of zeta.
In \S{\ref{numerics}}, we present results of numerical computations 
implemented in interval arithmetic
for a variety of examples of $L$-functions.

\section{Background and notation}\label{background}

Using the Dirichlet series \eqref{dirichlet series}, we see that
\begin{equation}\label{real sym}
    \zeta(\overline{s})=\overline{\zeta(s)}.
\end{equation}
So, $\rho=\beta+i\gamma$ is a zeta zero if and only if the complex conjugate $\overline{\rho}=\beta-i\gamma$ is a zeta zero, or equivalently the $\rho$'s are symmetric about the real axis. 
The $\rho$'s are also symmetric about the critical line. This 
is seen by using the zeta functional equation, which 
in its simplest form states that
the entire function 
$$\xi(s):=\pi^{-s/2}\Gamma(s/2+1)(s-1)\zeta(s),$$
where $\Gamma$ is the Gamma function\footnote{The poles of $\Gamma(s/2+1)$ are all simple 
and coincide with the trivial zeros of zeta, all of which are simple as well. So, the poles of $\Gamma(s/2+1)$ cancel the trivial zeros of zeta. 
The simple pole of zeta at $s=1$ coincides with the zero of the factor $s-1$ in the definition of $\xi$.}, satisfies the functional equation 
\begin{equation}\label{func eq}
    \xi(s)=\xi(1-s).
\end{equation}
Therefore, $\xi(s)$ is even about $s=1/2$. 
For example, $\xi(0)=\xi(1) = -\zeta(0)=1/2$. 
Since $\Gamma$ and $\pi^{-s/2}$ have no zeros at all,
the zeros of $\xi$ are the same as the nontrivial zeros of $\zeta$. 
Hence, by the functional equation \eqref{func eq}, $\rho$ is a zeta zero if and only if $1-\rho$ is a zeta zero. 

Furthermore, by the functional equation \eqref{func eq} and the symmetry relation \eqref{real sym}, 
\begin{equation*}
    \xi(1/2+it)=\xi(1/2-it)=\overline{\xi(1/2+it)}.
\end{equation*}
So, $\xi$ is real-valued on the critical line (as well as on the real axis).
It follows by the intermediate value theorem that the simple (or odd multiplicity) 
nontrivial zeta zeros on the critical line correspond to sign changes of $\xi(1/2+it)$. In particular, one can numerically prove the existence of zeta zeros of odd multiplicity
on the critical line by detecting sign changes of $\xi(1/2+it)$.

\section{Riemann and verifying the RH}\label{riemann attempt}
Being an entire function of order 1, $\xi$ has 
a Hadamard product given by
\begin{equation}\label{hadamard product}
    \xi(s)=\xi(0)\prod_{\rho}(1-s/\rho),
\end{equation}
where 
the product is taken by pairing the terms for $\rho$ and $\overline{\rho}$ (or pairing the terms for $\rho$ and $1-\rho$), which ensures correct convergence. 
Starting with  \eqref{hadamard product}, Riemann obtained the following formula
\begin{equation}\label{linear case}
\sum_{\rho}\frac{1}{\rho}=v_1\qquad \text{where}\qquad 
v_1:=\frac{1}{2}\lambda_0 +1-\frac{1}{2}\log 4\pi,    
\end{equation}
and the sum over the $\rho$'s is executed by pairing the terms for $\rho$ and $\overline{\rho}$. 
Therefore,
$$v_1=2\sum_{\gamma>0}\Re\,\frac{1}{\rho}.$$
As seen in Figure~\ref{riemann_constant}, Riemann correctly computed the value of $v_1$ 
up to 20 digits,
obtaining $v_1=0.02309570896612103381\ldots$.

\begin{figure}[htbp]
    \includegraphics[scale = 0.3]{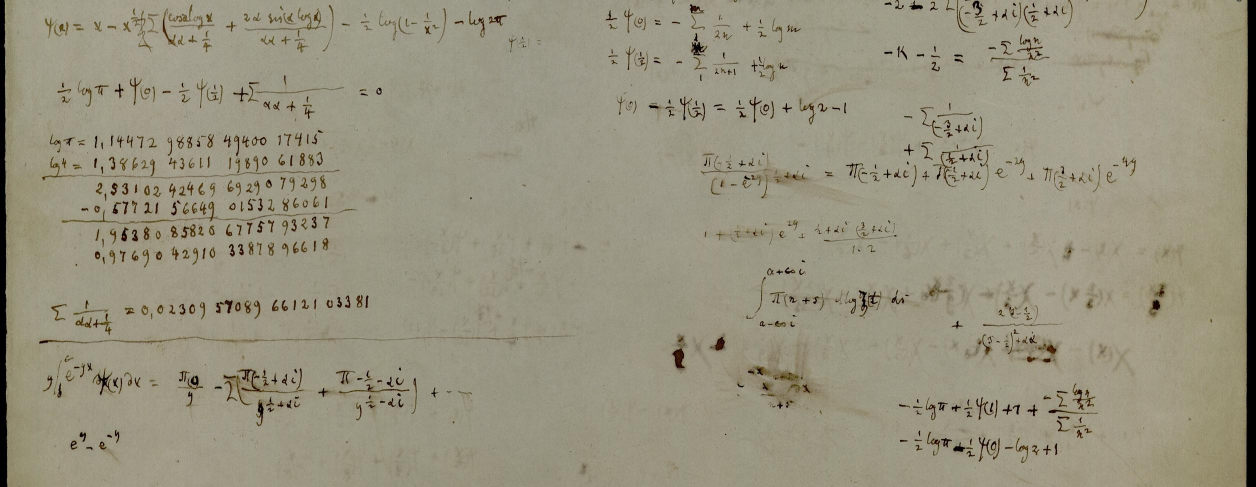}
    \caption{\small Riemann's computation of the sum over the zeros. Reproduced from \cite{riemann3} with permission.}
    \label{riemann_constant}
\end{figure}
According to Edwards~\cite[\S{7.6}]{edwards_riemann_2001}, Riemann even 
attempted to use the 
numerical value of $v_1$ to verify that the Riemann approximation of $\rho_1=1/2+i\gamma_1$ 
indeed corresponded to the first zeta zero (zeta zero of lowest height). This attempt is described essentially as follows.

Using the first $10$ zeros in the upper half-plane, $2\,\Re\,(\rho_1^{-1}+\ldots+\rho_{10}^{-1}) \approx 0.0136$.
On the other hand, 
if there is a zero $\rho_0$ in the upper half-plane of height $<\gamma_1$, 
then there must be a second such zero. This is because either 
$\rho_0$ is off the critical line, in which case $1-\overline{\rho_0}$ is a distinct zeta zero in the upper half-plane that is also of 
height $<\gamma_1$. Or $\rho_0$ is on the critical line, in which case, 
considering that $\xi(1/2+it)$  has the same sign at both $t=0$ and $t=14.1$,
there must be a second zero on the critical line with a positive ordinate $<\gamma_1$.\footnote{More precisely, 
the argument in \cite[\S{7.6}]{edwards_riemann_2001} 
only works if $\rho_0$ has height $< 14.1$. 
Since the possibility that $\rho_0$ has height $\ge 14.1$ is not yet ruled out, 
this argument  
does not force the existence of a second zero on the critical line in this case.} 
Therefore, if $\rho_0$ existed, then it would force an additional contribution of at least $2\,\Re(\rho_1^{-1})$,
 causing the zeros sum to exceed $v_1$ and hence gives a contradiction.

\begin{figure}[htbp]
    \includegraphics[scale = 0.13]{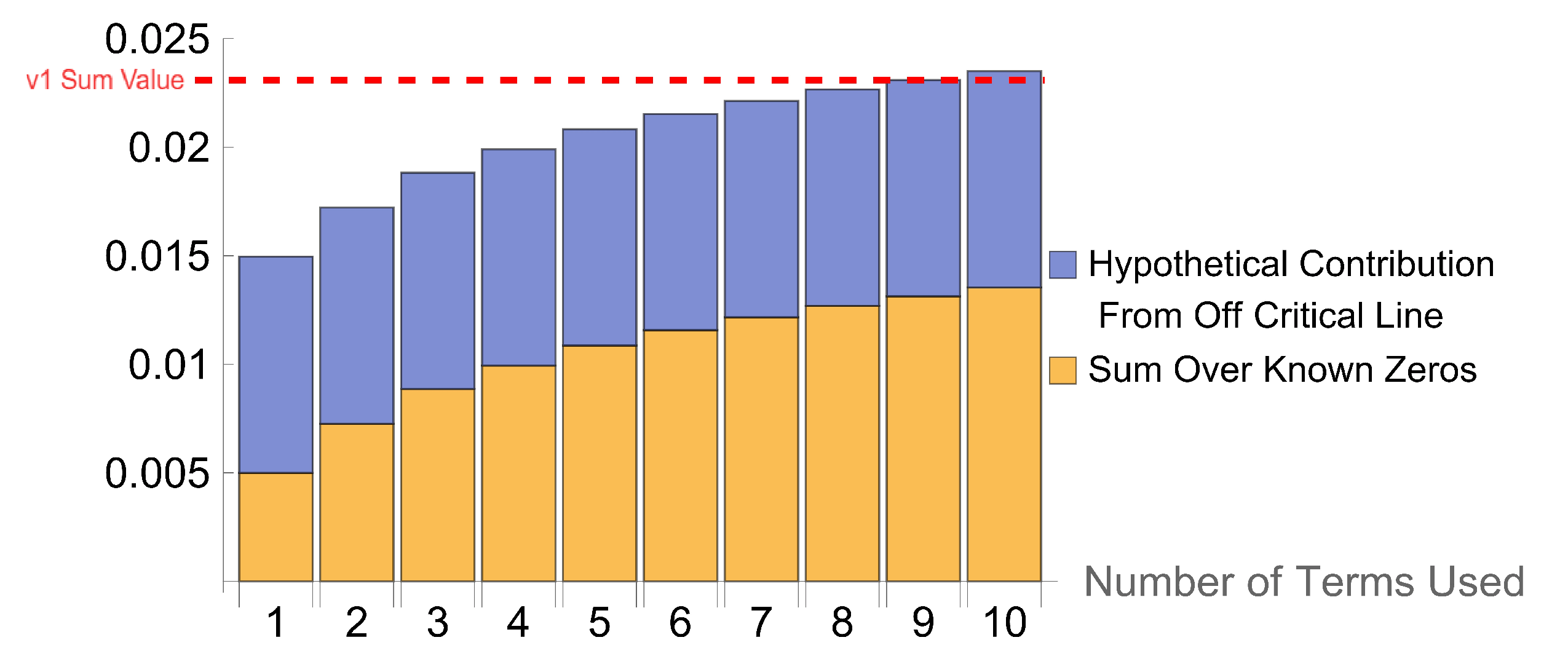}
    \caption{\small Illustration of the Riemann verification method. A contradiction is reached on using $10$ zeta zeros.}
    \label{illustration}
\end{figure}

Although not stated explicitly, it is critical to the last part of the argument that the terms 
$$\Re\,\frac{1}{\rho}= \frac{\beta}{\beta^2+\gamma^2}$$ 
are all \textit{nonnegative}. 
This ensures that the tail of the zeros sum contributes a nonnegative amount to $v_1$. 
Therefore, we can drop the tail of the zeros sum and still obtain 
a valid lower bound on $v_1$. 

More generally, in this paper, 
we will consider the behavior of the function 
\begin{equation}\label{phi def}
\phi(\beta,\eta,x):= \frac{\beta-x}{(\beta-x)^2+\eta^2}+ \frac{1-\beta-x}{(1-\beta-x)^2+\eta^2}.  
\end{equation} 
If $z=x+iy$ is a complex number then we have
$$\Re\,\left[\frac{1}{\rho-z}+\frac{1}{1-\overline{\rho}-z}\right]= 
\phi(\beta,\gamma-y,x).$$
Note that $\phi(\beta,\eta,x)$ is nonnegative for $0\le \beta\le 1$ and $x\le 0$. 
To analyze the behavior of $\phi$ in detail, we will often invoke the following lemma.

\begin{lemma}\label{phi lemma}
    Let $\beta$ be a real number such that $0\le \beta\le 1$. Let 
    $x$ be a real nonpositive number, and let $\eta$ be a real positive number. Then $\phi(\beta,\eta,x)\ge 0$. Furthermore, we have the following.
    \vspace{2mm}
      \begin{itemize}
      \setlength\itemsep{1em}
        \item[(i)] If $\displaystyle \eta \le \sqrt{\frac{x(x-1)}{3}}$, then $\phi(\beta,\eta,x)$ is minimized at $\displaystyle \beta=\frac{1}{2}$.
        \item[(ii)] If $\displaystyle \eta > \sqrt{\frac{x(x-1)}{3}}$, then $\phi(\beta,\eta,x)$ is minimized at $\beta=0$ (or $\beta=1$).
        \item[(iii)] If $\displaystyle \eta> \frac{1-2x}{2\sqrt3}$, then $\phi(\beta,\eta,x)$ is maximized at $\displaystyle \beta=\frac{1}{2}$.
        \item[(iv)] $\displaystyle\frac{\partial}{\partial u}\phi(\beta,u,x)$ is negative. Additionally, if $\displaystyle u > \frac{1-2x}{2\sqrt3}$ then $\displaystyle\frac{\partial}{\partial u}\phi(1/2,u,x)$ is increasing, and if $\displaystyle u>\frac{2-2x}{2\sqrt3}$ then $\displaystyle\frac{\partial}{\partial u}\phi(0,u,x)$ is increasing.
    \end{itemize}
    \vspace{2mm}
\end{lemma}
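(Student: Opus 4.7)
The plan is to write $\phi(\beta,\eta,x) = f(\beta-x) + f(1-\beta-x)$ with $f(v) := v/(v^2+\eta^2)$, exploiting the symmetry $\phi(\beta,\eta,x) = \phi(1-\beta,\eta,x)$, which makes $\beta = 1/2$ a critical point automatically. Nonnegativity is immediate since $\beta-x \ge 0$ and $1-\beta-x \ge 0$ in the stated range, and $f(v) \ge 0$ for $v \ge 0$.

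For parts (i)--(iii), I would first classify the critical points of $\beta \mapsto \phi(\beta,\eta,x)$ on $[0,1]$. Using $f'(v) = (\eta^2-v^2)/(v^2+\eta^2)^2$, setting $\partial_\beta\phi = f'(\beta-x) - f'(1-\beta-x) = 0$, clearing denominators, and writing $a = \beta-x$, $b = 1-\beta-x$, one finds that besides the $a=b$ solution ($\beta=1/2$) the equation reduces to $a^2 b^2 = \eta^2(a^2+b^2) + 3\eta^4$. Since $a+b = 1-2x$ is constant, the quadratic in $p=ab$ shows that extra critical points (symmetric about $\beta=1/2$) exist iff $\eta < (1-2x)/(2\sqrt{3})$. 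A second-derivative test at $\beta=1/2$, using $f''(v) = 2v(v^2-3\eta^2)/(v^2+\eta^2)^3$, gives $\partial_\beta^2\phi(1/2,\eta,x) = 2f''(1/2-x)$ whose sign matches this same threshold: $\beta=1/2$ is a local minimum when $\eta < (1-2x)/(2\sqrt{3})$ and a local maximum when $\eta > (1-2x)/(2\sqrt{3})$. When $\eta > (1-2x)/(2\sqrt{3})$, $\beta=1/2$ is therefore the only interior critical point and is a local max, so it is the global max on $[0,1]$; this yields (iii).

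To finish (i) and (ii) I need the sign of $\phi(0,\eta,x) - \phi(1/2,\eta,x)$, which equals the central second difference $f(\alpha-1/2) + f(\alpha+1/2) - 2f(\alpha)$ at $\alpha := 1/2-x > 0$. Writing $f(v) = \Re[1/(v-i\eta)]$ and combining over the common denominator with $w := \alpha - i\eta$, the numerator collapses to the constant $1/2$, since the outer pair sums to $2(\alpha-i\eta)^2$ while the middle contributes $-2((\alpha-i\eta)^2 - 1/4)$. Hence
$$ \phi(0,\eta,x) - \phi(1/2,\eta,x) = \Re\,\frac{1/2}{w(w^2-1/4)}. $$
Using $\Re(1/z) = \Re(z)/|z|^2$ together with $\Re(w^3) = \alpha(\alpha^2-3\eta^2)$ and $\Re(w) = \alpha$, the sign equals that of $\alpha(\alpha^2 - 1/4 - 3\eta^2)$. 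Since $\alpha > 0$, this is nonnegative iff $\eta^2 \le (\alpha^2 - 1/4)/3 = x(x-1)/3$. Combined with the critical-point structure above, (i) and (ii) follow. For (iv), differentiating $\phi$ in $\eta$ produces two terms of the form $-2\eta v/(v^2+\eta^2)^2$ with $v = \beta-x$ and $v = 1-\beta-x$, both nonpositive, so $\partial_\eta\phi < 0$. A second differentiation at $\beta = 1/2$ gives a quantity whose sign agrees with $3\eta^2 - (1/2-x)^2$, positive iff $\eta > (1-2x)/(2\sqrt{3})$; the analogous computation at $\beta = 0$ has two summands with thresholds $|x|/\sqrt{3}$ and $(1-x)/\sqrt{3}$ on $\eta$, and requiring $\eta > (2-2x)/(2\sqrt{3}) = (1-x)/\sqrt{3}$ makes both positive.

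The subtle point is the local-versus-global minimum distinction: the second-derivative threshold $(1-2x)/(2\sqrt{3})$ and the global-minimum threshold $\sqrt{x(x-1)/3}$ are different (the former is strictly larger), so in the intermediate range $\sqrt{x(x-1)/3} < \eta < (1-2x)/(2\sqrt{3})$, $\beta = 1/2$ remains a local minimum yet the global minimum has migrated to the boundary. The clean identity for $\phi(0,\eta,x) - \phi(1/2,\eta,x)$ as $\Re\,[1/(2w(w^2-1/4))]$ is what makes this comparison painless; a direct fraction-merging argument works but is substantially more tedious.
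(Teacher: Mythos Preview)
Your argument is correct and reaches the same conclusions, but by a genuinely different and rather cleaner route than the paper's. The paper computes $\partial_\beta\phi$ directly, factors out $(2\beta-1)(2x-1)$, and is left with a degree-$4$ polynomial $G(\beta,\eta,x)$ whose sign pattern on $[0,1]$ it analyzes through three separate cases (positive throughout, negative throughout, sign-changing), each delimited by solving $G(0,\eta,x)=0$ or $G(1/2,\eta,x)=0$; the transition threshold $\sqrt{x(x-1)/3}$ is then found by setting $\phi(0,\eta,x)=\phi(1/2,\eta,x)$ and solving, with the algebra suppressed. Your approach via $f(v)=v/(v^2+\eta^2)$ and the substitution $a=\beta-x$, $b=1-\beta-x$ with $a+b$ fixed reduces the extra-critical-point question to a quadratic in $p=ab$, bypassing $G$ entirely; and your complex-variable identity $\phi(0,\eta,x)-\phi(1/2,\eta,x)=\Re\bigl[\tfrac{1}{2}/\bigl(w(w^2-\tfrac14)\bigr)\bigr]$ with $w=\tfrac12-x-i\eta$ makes the threshold computation transparent rather than a black-box solve. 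The trade-off is that the paper's $G$-based analysis gives the nature of every critical point for free from the sign of $G$, whereas your write-up leans on ``combined with the critical-point structure above'' to conclude the global minimum lies at $\beta\in\{0,\tfrac12,1\}$; this is fine once one notes that the quadratic in $p$ has at most one admissible root, so there is at most one pair of extra critical points, which must then be local maxima since $\beta=\tfrac12$ is a local minimum and extrema alternate. One small imprecision: your ``extra critical points exist iff $\eta<(1-2x)/(2\sqrt3)$'' gives the condition for real $a\ne b$, not for $\beta\in(0,1)$; the paper finds an additional lower threshold $v(x)$ below which the extra critical points leave $(0,1)$. This does not affect your conclusions, since in that regime $\beta=\tfrac12$ is the sole interior critical point and already the global minimum.
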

\begin{proof}
    See \S{\ref{lemma 6 proof}}.
\end{proof}

\section{Generalization to a class of $L$-functions}\label{lfunction case}
 
In the sequel, 
we use the analytic normalization of $L$-functions, so the critical line is $\sigma=1/2$.
We consider $L$-functions of order $1$ only.
The following notation and assumptions are used throughout this section.
Let $L(s)$ be a Dirichlet series 
$$L(s)=\sum_{n\ge 1} \frac{a(n)}{n^s},$$
absolutely convergent in the half-plane $\sigma>1$.
We suppose that the Dirichlet coefficients $a(n)$ are real, 
so that 
$$L(\overline{s}) = \overline{L(s)},$$ 
and the zeros of $L(s)$ 
must be symmetric about the real axis. 

Following the notation in Booker~\cite{booker_2006},
specialized to our context\footnote{In particular, we require that the $\mu_j$ are real and 
$\mu_j\ge 0$ instead of $\Re\,(\mu_j)\ge -1/2$. We also write the formulas 
for $\Gamma_{\mathbb{R}}(s)$ and $\overline{f}(z)$ explicitly as 
$\pi^{-s/2}\Gamma(s/2)$ and $\overline{f(\overline{z})}$, respectively, as well as
drop a scaling factor by $N^{-1/4}$ in the definition of $\gamma(s)$ in \cite[387]{booker_2006} as this
does not interfere with any of our calculations.}, we state a number of assumptions satisfied 
by the set of $L$-functions we consider.
$L(s)$ has an Euler product of degree $r$ absolutely convergent in the half-plane $\sigma > 1$,
$$L(s)=\prod_{p\,\text{prime}}\frac{1}{(1-\alpha_{p,1}p^{-s})\cdots (1-\alpha_{p,r}p^{-s})},$$
where the $\alpha_{p,j}$ satisfy the conditions in \cite[387]{booker_2006}.  We will further assume
that $|\alpha_{p,j}|\le 1$.
Note that by the absolute convergence of the Euler product, $L(s)$ has no zeros in the half-plane $\sigma>1$.

Suppose further there 
are positive integers $r$ and $N$, a complex number $\epsilon$ of modulus $1$, and real nonnegative numbers $\mu_1,\ldots,\mu_r$, such that the function $\xi_L(s)$ defined by
\begin{equation}\label{Lambda def}
    \xi_L(s) := \gamma(s)L(s),\qquad \gamma(s) :=\epsilon N^{s/2} \pi^{-sr/2}\prod_{j=1}^r \Gamma(s/2+\mu_j/2),
\end{equation}
extends to an entire function and satisfies the functional equation
\begin{equation}\label{xi L fe}
    \xi_L(s)= \overline{\xi_L(1-\overline{s})}.
\end{equation}

Note that by the functional equation, $\xi_L(1/2+it)$ is real.
Also, $\xi_L$ is real on the real axis.
If $\epsilon=\pm 1$, 
then the functional equation simplifies to $\xi_L(s)=\xi_L(1-s)$ which means that 
$\xi_L(1/2+it)$ is even in $t$. While if $\epsilon=\pm i$, then
$\xi_L(s)=-\xi_L(1-s)$ which means that $\xi_L(1/2+it)$ is odd in $t$, 
and hence must have a zero of 
odd multiplicity at $t=0$.

Since $\xi_L(s)$ is entire, $L(s)$ must have zeros at the poles of $\gamma(s)$, 
which are the trivial zeros of $L(s)$. 
Since $L(s)$ has no zeros in the half-plane $\sigma>1$,  
it follows by the functional equation that the trivial zeros of $L(s)$ in
$\sigma<0$ have the same multiplicities as the poles of $\gamma(s)$. Moreover, the
 nontrivial zeros of $L(s)$, which we 
 denote by $\rho=\beta+i\gamma$, are in the critical strip $0\le \sigma\le 1$.

We assume $L(1)\ne 0$, so that $\xi_L(1)\ne 0$, and hence $\xi_L(0)\ne 0$.
Therefore, the zeros of $\xi_L(s)$ are exactly the nontrivial zeros of $L(s)$. 
Also, just like $\xi(s)$, $\xi_L(s)$ being of order $1$ has a Hadamard product
$$\xi_L(s)=\xi_L(0)\prod_{\rho}(1-s/\rho),$$
where we pair the terms for $\rho$ and $\overline{\rho}$ (or for $\rho$ and $1-\rho$). 
 The RH for $L(s)$ is the assertion that all the $\rho$'s are on the critical line $\sigma=1/2$.

To state the next proposition, we recall the $j$-th order polygamma function $\psi_j(s)$, 
defined as the $j$-th derivative of 
$\psi_0(s) = \Gamma'(s)/\Gamma(s)$. 
Also, for any real number $\delta<1$  such that $L(1-\delta)\ne 0$, let us write 
\begin{equation}\label{log l series}
    \log L(s-\delta)=\sum_{j\ge 0} d_{j,\delta} (s-1)^j,\qquad d_{j,\delta}=\frac{1}{j!}\left[\frac{d^j}{ds^j}\log L(s-\delta)\right]_{s=1}
\end{equation}
for $s$ sufficiently close to $1$.

\begin{lemma}\label{k power l}
    Let $k$ be a positive integer. 
    Let $\delta$ be a real number such that $\delta<1$
    and $\xi_L(\delta)\ne 0$. 
    Define
    $$w_{k,\delta}:=\sum_{\rho}\frac{1}{(\rho-\delta)^k},$$
    where the sum is ordered by pairing each term with its conjugate.
    Then $w_{k,\delta}$ is a real number.
    If $k>1$, then
    $$w_{k,\delta}= (-1)^{k-1}\left[\frac{1}{2^k (k-1)!}\sum_{j=1}^r\psi_{k-1}(1/2-\delta/2+\mu_j/2)+k d_{k,\delta}\right].$$
    And if $k=1$, then the same formula holds but there is an additional term of 
    $$\frac{1}{2}\log N- \frac{r}{2}\log \pi.$$
\end{lemma}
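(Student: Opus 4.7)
The plan is to compute $\bigl(\xi_L'/\xi_L\bigr)^{(k-1)}(1-\delta)$ in two ways and equate. From the Hadamard product $\xi_L(s)=\xi_L(0)\prod_\rho(1-s/\rho)$ with Riemann's pairing of $\rho$ with $\overline\rho$, taking the logarithmic derivative gives $\bigl(\xi_L'/\xi_L\bigr)(s)=\sum_\rho 1/(s-\rho)$, and differentiating $k-1$ more times yields
$$\left(\frac{\xi_L'}{\xi_L}\right)^{(k-1)}\!(s)\;=\;(-1)^{k-1}(k-1)!\,\sum_\rho \frac{1}{(s-\rho)^k}.$$
For $k\ge 2$ this sum is absolutely convergent; for $k=1$ it is only conditionally convergent and Riemann's pairing is essential.

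Evaluating at $s=1-\delta$, I would convert the sum into $w_{k,\delta}$ using the bijection $\rho\mapsto 1-\overline\rho$ of the zero set, a consequence of combining the functional equation \eqref{xi L fe} with $L(\overline s)=\overline{L(s)}$. This map is an involution, and since $\overline{1-\overline\rho}=1-\rho$ it carries each conjugate pair $\{\rho,\overline\rho\}$ to another conjugate pair, so the Riemann pairing is preserved. Writing $\rho=1-\overline{\rho'}$ gives $1-\delta-\rho=\overline{\rho'}-\delta$, and the resulting paired sum equals $w_{k,\delta}$ because each pair $\{\rho',\overline{\rho'}\}$ contributes $(\rho'-\delta)^{-k}+(\overline{\rho'}-\delta)^{-k}$ in either order. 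Thus $\bigl(\xi_L'/\xi_L\bigr)^{(k-1)}(1-\delta)=(-1)^{k-1}(k-1)!\,w_{k,\delta}$, and realness of $w_{k,\delta}$ is immediate from the per-pair expression $2\,\Re\,(\rho-\delta)^{-k}$.

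On the second side, I use $\log\xi_L=\log\gamma+\log L$. From \eqref{Lambda def}, up to an additive constant,
$$\log\gamma(s)\;=\;\tfrac{s}{2}\log N\;-\;\tfrac{rs}{2}\log\pi\;+\;\sum_{j=1}^r\log\Gamma\bigl(s/2+\mu_j/2\bigr).$$
Differentiating $k$ times, the linear terms contribute $\tfrac12\log N-\tfrac{r}{2}\log\pi$ only when $k=1$, while the Gamma factors give $2^{-k}\sum_{j=1}^r\psi_{k-1}(s/2+\mu_j/2)$; evaluated at $s=1-\delta$ the polygamma argument becomes $1/2-\delta/2+\mu_j/2$, as required. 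For the $L$-factor, the Taylor expansion \eqref{log l series} gives $\tfrac{d^k}{ds^k}\log L(s-\delta)|_{s=1}=k!\,d_{k,\delta}$, and the chain rule $u=s-\delta$ turns this into $\tfrac{d^{k-1}}{du^{k-1}}(L'/L)(u)|_{u=1-\delta}=k!\,d_{k,\delta}$.

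Equating the two expressions for $\bigl(\xi_L'/\xi_L\bigr)^{(k-1)}(1-\delta)$ and dividing by $(-1)^{k-1}(k-1)!$ produces the claimed formulas for both $k>1$ and $k=1$, with the extra term $\tfrac12\log N-\tfrac{r}{2}\log\pi$ appearing exactly in the $k=1$ case. The main obstacle is the conditional convergence of the zero sum when $k=1$; this is handled cleanly by the bijection $\rho\mapsto 1-\overline\rho$ above, which is compatible with the Riemann pairing and therefore justifies the rearrangement needed to identify the sum with $w_{1,\delta}$.
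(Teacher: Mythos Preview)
Your proof is correct and follows essentially the same approach as the paper: compute the $k$-th derivative of $\log\xi_L$ in two ways (via the Hadamard product over zeros and via the explicit factorization $\xi_L=\gamma\cdot L$) and equate. The only cosmetic difference is where the functional equation enters---the paper applies it at the level of $\xi_L$ to pass from $\tfrac{d^k}{ds^k}\log\xi_L(s+\delta)\big|_{s=0}$ to $\tfrac{d^k}{ds^k}\log\xi_L(s-\delta)\big|_{s=1}$, whereas you apply it at the level of the zeros via the bijection $\rho\mapsto 1-\overline{\rho}$ to identify $\sum_\rho(1-\delta-\rho)^{-k}$ with $w_{k,\delta}$; these are two faces of the same symmetry.
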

\begin{proof}
Since $\delta$ is a real number and the $\rho$'s are symmetric about the real axis, $w_{k,\delta}$ is real.
By the Hadamard product for $\xi_L$,
\begin{equation*}
    \log \xi_L(s+\delta) = \log\xi_L(\delta)-\sum_{k\ge 1} \frac{w_{k,\delta}}{k} s^k,
\end{equation*}
provided $s$ is sufficiently close to $0$. Therefore,
\begin{equation}\label{wk formula}
    -\frac{w_{k,\delta}}{k}=\frac{1}{k!} \left[\frac{d^k}{ds^k}\log\xi_L(s+\delta)\right]_{s=0}.
\end{equation}
By the functional equation \eqref{xi L fe}, 
\begin{equation}\label{deriv fe L}
    \frac{1}{k!}\left[\frac{d^k}{ds^k}\log\xi_L(s+\delta)\right]_{s=0}=\frac{(-1)^k}{k!}\left[\frac{d^k}{ds^k}\log\xi_L(s-\delta)\right]_{s=1}.
\end{equation}
On the other hand, recalling the definition of $\xi_L(s)$,
\begin{equation*}
    \log \xi_L(s) = \log \epsilon+\frac{s}{2}\log N -\frac{sr}{2}\log \pi+\sum_{j=1}^r \log \Gamma(s/2+\mu_j/2)+\log L(s)
\end{equation*}
for $s$ away from zeros or poles of both sides. Therefore, replacing $s$ with $s-\delta$, 
and using the series expansion \eqref{log l series}, we obtain
\begin{align}\label{k deriv formula L}
    \frac{1}{k!}\left[\frac{d^k}{ds^k}\log\xi_L(s-\delta)\right]_{s=1}=&\,
    \mathds{1}_{k=1} \left(\frac{1}{2}\log N-\frac{r}{2}\log\pi\right)+ \\
    &\frac{1}{2^k k!}\sum_{j=1}^r\psi_{k-1}(1/2-\delta/2+\mu_j/2) + d_{k,\delta}, \nonumber
\end{align}
where $\mathds{1}_{k=1}$ is the indicator function of the condition $k=1$.
Substituting \eqref{k deriv formula L} back into \eqref{deriv fe L},
then back into \eqref{wk formula}, yields the proposition.
\end{proof}

Since our numerical experiments in \S{\ref{numerics}} 
will focus on the case $k=1$, we provide a version of Lemma~\ref{k power l} in this special case.
\begin{corollary}\label{k=1 corollary l} 
When $k=1$, we have
    $$ w_{1,\delta}=\frac{1}{2}\log N- \frac{r}{2}\log \pi+
    \frac{1}{2}\sum_{j=1}^r\psi_0\left(\frac{1-\delta+\mu_j}{2}\right)+
     \frac{L'(1-\delta)}{L(1-\delta)}.$$
\end{corollary}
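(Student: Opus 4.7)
The plan is to simply specialize Lemma~\ref{k power l} to the case $k=1$ and identify the coefficient $d_{1,\delta}$ as a logarithmic derivative of $L$ at the point $1-\delta$. Because Lemma~\ref{k power l} already does the heavy lifting (invoking the Hadamard product, the functional equation, and the logarithmic series for $L(s-\delta)$ around $s=1$), the corollary is essentially a bookkeeping exercise, so there is no real obstacle; I just have to unwind the notation cleanly.

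First, I would set $k=1$ in the formula from Lemma~\ref{k power l}. The prefactor $(-1)^{k-1}=1$, the factor $\frac{1}{2^k(k-1)!}=\frac{1}{2}$, and $\psi_{k-1}=\psi_0$. This gives
$$w_{1,\delta} = \frac{1}{2}\sum_{j=1}^r \psi_0\!\left(\frac{1}{2}-\frac{\delta}{2}+\frac{\mu_j}{2}\right) + d_{1,\delta},$$
to which we add the extra contribution $\tfrac{1}{2}\log N - \tfrac{r}{2}\log\pi$ that Lemma~\ref{k power l} records for $k=1$.

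Next, I would read off $d_{1,\delta}$ from the defining expansion \eqref{log l series}. By definition,
$$d_{1,\delta} = \left[\frac{d}{ds}\log L(s-\delta)\right]_{s=1} = \frac{L'(1-\delta)}{L(1-\delta)},$$
which is well defined since the hypothesis $\xi_L(\delta)\neq 0$ in Lemma~\ref{k power l} forces $L(1-\delta)\neq 0$ (via the functional equation, noting that $\gamma(1-\delta)$ is finite and nonzero when $\xi_L(\delta)\neq 0$ and $1-\delta$ is not a pole of $\gamma$; for the stated corollary we simply invoke the same nonvanishing assumption implicit in the choice of $\delta$).

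Finally, I would combine the $\Gamma$-factor arguments using $\tfrac{1}{2}-\tfrac{\delta}{2}+\tfrac{\mu_j}{2}=\tfrac{1-\delta+\mu_j}{2}$ to match the stated form, yielding
$$w_{1,\delta}=\frac{1}{2}\log N-\frac{r}{2}\log\pi+\frac{1}{2}\sum_{j=1}^r \psi_0\!\left(\frac{1-\delta+\mu_j}{2}\right)+\frac{L'(1-\delta)}{L(1-\delta)},$$
as claimed. The entire argument is two or three lines of substitution; the only care needed is to keep track of the indicator term $\mathds{1}_{k=1}$ that produced the $\tfrac{1}{2}\log N-\tfrac{r}{2}\log\pi$ contribution in the proof of Lemma~\ref{k power l}.
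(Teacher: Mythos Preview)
Your proposal is correct and follows exactly the route the paper intends: the corollary is stated immediately after Lemma~\ref{k power l} with no separate proof, since it is just the $k=1$ specialization together with the identification $d_{1,\delta}=L'(1-\delta)/L(1-\delta)$. There is nothing to add.
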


Let us note that many special values $\psi_0(s)$ can be expressed exactly 
in terms of known constants.\footnote{For example, when $s=1/2$, 
    $\psi_0(1/2)=-2\log 2-\lambda_0$, $\psi_1(1/2)= -\pi^2/2$,
    $\psi_2(1/2) =-14\zeta(3)$, $\psi_3(1/2) = \pi^2$,
and more generally for $j\ge 1$, 
$\psi_j(1/2)= (-1)^{j+1}j!(2^{j+1}-1)\zeta(j+1)$. As another example, when $s=1$, we have $\psi_0(1)=-\lambda_0$.} 
In general, there are
efficient ways for computing $\psi_0(x)$ for $x>0$; 
see for example \cite{johansson_2021} for a discussion of methods to compute $\Gamma$, $\psi_0$, and related functions.
Therefore, for the purpose of computing $w_{1,\delta}$, we may focus our attention on the logarithmic derivative 
of $L(s)$ at $s=1-\delta$. 
The next lemma supplies a \textit{simple} formula for doing this, provided $\delta<0$. 
We make use of the following notation: 
if $n=p^m$ for a prime $p$ and a natural number $m$, then
$$ \Lambda_L(n):= \log p \sum_{j=1}^r \alpha_{j,p}^m,$$
and we set $\Lambda_L(n)=0$ otherwise. In particular, 
since $|\alpha_{j,p}|\le 1$, 
$$|\Lambda_L(n)|\le r \Lambda(n),$$ 
where $\Lambda(n)$ is the von Mangoldt function. This is defined by 
$\Lambda(n) = \log p$ if $n=p^m$ for a prime $p$ and a natural number $m$, 
and $\Lambda(n)=0$ otherwise.

\begin{lemma}\label{1st log deriv 1}
Let $K\ge 18$ be an integer. If $\delta<0$, then
$$\frac{L'(1-\delta)}{L(1-\delta)} = -\sum_{k= 1}^K \frac{\Lambda_L(k)}{k^{1-\delta}} + \mathcal{R}_L(K,\delta),$$
where
$$\left|\mathcal{R}_L(K,\delta)\right|< \frac{r K^{\delta}}{\delta} \left(2.85\cdot \frac{2\delta-1}{\log K}-1\right).$$
\end{lemma}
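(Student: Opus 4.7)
The plan is to expand $L'/L$ as an absolutely convergent Dirichlet series in the half-plane $\sigma>1$, identify the tail past $K$ as $\mathcal{R}_L(K,\delta)$, and estimate that tail by Abel summation combined with an explicit Chebyshev-type bound on $\psi(x)=\sum_{n\le x}\Lambda(n)$.

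Taking logarithmic derivatives of the Euler product term-by-term (justified by absolute convergence for $\sigma>1$) yields $-L'(s)/L(s)=\sum_{n\ge 1}\Lambda_L(n)/n^s$. Since $\delta<0$, the point $s=1-\delta$ lies in this half-plane, so splitting the sum at $n=K$ identifies
$$\mathcal{R}_L(K,\delta)=-\sum_{n>K}\frac{\Lambda_L(n)}{n^{1-\delta}}.$$
The triangle inequality together with $|\Lambda_L(n)|\le r\Lambda(n)$ reduces the task to bounding $r\sum_{n>K}\Lambda(n)/n^{1-\delta}$.

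For the $\Lambda$-tail, Abel summation with the Chebyshev function $\psi$ gives
$$\sum_{n>K}\frac{\Lambda(n)}{n^{1-\delta}}=-\frac{\psi(K)}{K^{1-\delta}}+(1-\delta)\int_K^\infty \frac{\psi(t)}{t^{2-\delta}}\,dt,$$
the boundary term at infinity vanishing because $\psi(t)=O(t)$ and $1-\delta>1$. I would decompose $\psi(t)=t+(\psi(t)-t)$. The main piece simplifies to $-K^\delta-(1-\delta)K^\delta/\delta=-K^\delta/\delta$, which produces the $-1$ appearing inside the bracketed factor of the target bound. For the error, I would invoke an effective Rosser--Schoenfeld-type estimate of the form $|\psi(t)-t|\le 2.85\,t/\log t$ valid for $t\ge 18$. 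The boundary error then contributes at most $2.85\,K^\delta/\log K$, and after using $\log t\ge \log K$ to extract $1/\log K$ from the integral, the integral error contributes at most $-2.85(1-\delta)K^\delta/(\delta\log K)$. The algebraic identity $\delta-(1-\delta)=2\delta-1$ consolidates these into $2.85\,K^\delta(2\delta-1)/(\delta\log K)$, which when added to the main term and divided through by $K^\delta/\delta$ gives exactly the factor $2.85(2\delta-1)/\log K-1$; multiplying by $r$ yields the asserted bound.

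The main obstacle is the careful sign-accounting: with $\delta<0$, both $K^\delta/\delta$ and the bracketed factor $2.85(2\delta-1)/\log K-1$ are negative, so their product is the required positive upper bound. A secondary concern is justifying that the explicit bound $|\psi(t)-t|\le 2.85\,t/\log t$ actually holds down to $t=18$; this is presumably verified by a direct numerical check on a small initial range combined with an analytic Rosser--Schoenfeld estimate for large $t$, and it is precisely what dictates the integer threshold $K\ge 18$ in the hypothesis.
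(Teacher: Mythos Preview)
Your proposal is correct and follows essentially the same route as the paper: expand $L'/L$ via the Euler product, bound $|\Lambda_L(n)|\le r\Lambda(n)$, apply Abel summation to the $\Lambda$-tail, and feed in the Rosser bound $|\psi(u)-u|\le 2.85\,u/\log u$ (weakened to $2.85\,u/\log K$ on $u\ge K$). The only cosmetic difference is that the paper quotes the Rosser inequality already in the form $u(1-2.85/\log K)\le\psi(u)\le u(1+2.85/\log K)$ for $u\ge K$ and integrates directly, whereas you make the decomposition $\psi(t)=t+(\psi(t)-t)$ and the step $\log t\ge\log K$ explicit.
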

\begin{proof}
    Suppose $\sigma >1$. By the Euler product for $L(s)$,
\begin{equation}\label{log deriv series}
    \frac{L'}{L}(s) = -\sum_{k\ge 1} \frac{\Lambda_L(k)}{k^s}.
\end{equation}
So, by Stietljes integration and the bound $|\Lambda_L(k)|\le r \Lambda(k)$, 
the tail $\mathcal{R}_L(K,\delta)$ 
of the Dirichlet series \eqref{log deriv series} for $k>K$ and $s=1-\delta$ satisfies
\begin{equation}\label{chebychev bound 0}
    \left|\mathcal{R}_L(K,\delta)\right|\le 
     r\int_K^{\infty} \frac{1}{u^{1-\delta}}\,d\psi(u) \quad\text{where}\quad \psi(u) = \sum_{k\le u}\Lambda(k).
\end{equation}
Using integration  by parts, 
\begin{equation}\label{chebychev bound}
    \int_K^{\infty} \frac{1}{u^{1-\delta}}\,d\psi(u) = -\frac{\psi(K)}{K^{1-\delta}}+(1-\delta)\int_K^{\infty} \frac{\psi(u)}{u^{2-\delta}}\,du.
\end{equation}
Furthermore, by \cite[227]{rosser_1941}, we have for $u\ge K$ 
the double inequality 
$$0<u\left(1-\frac{2.85}{\log K}\right) \le \psi(u) \le u\left(1+\frac{2.85}{\log K}\right).$$ 
Substituting this into 
\eqref{chebychev bound}, then back into \eqref{chebychev bound 0}, and integrating yields the result.
\end{proof}

\begin{remark}
A simpler version of Lemma~\ref{1st log deriv 1} is obtained by using the trivial bound
$|\Lambda_L(k)|\le r\log k$. This gives 
\begin{equation*}\label{chebychev bound 2}
    \left|\mathcal{R}_L(K,\delta)\right|\le 
    r\sum_{n>K} \frac{\log k}{k^{1-\delta}}< r\int_K^{\infty} \frac{\log u}{u^{1-\delta}}\,du= 
    r K^{\delta}\cdot \frac{1-\delta\log K}{\delta^2}.
\end{equation*}
Although usually not as precise as Lemma~\ref{1st log deriv 1}, 
this estimate is sharper than Lemma~\ref{1st log deriv 1} 
if $\delta$ is very large compared to $\log K$.
\end{remark}

\begin{remark}
Lemma~\ref{1st log deriv 1} generalizes easily to higher order 
logarithmic derivatives of $L(s)$ at $s=1-\delta$. For example,
$$\left[\frac{d^2}{ds^s}\log L(s)\right]_{s=1-\delta} = 
-\sum_{k= 1}^K \frac{\Lambda_L(k)\log k}{k^{1-\delta}} + \mathcal{R}_{L,2}(K,\delta),$$
where
$$\left|\mathcal{R}_{L,2}(K,\delta)\right|< r K^{\delta}\left(2.85-\log K 
+\frac{1/(1-\delta)-\delta\log K}{\delta^2}(1-\delta)(1+2.85/\log K)
\right).$$
\end{remark}

Theorem~\ref{main theorem l} next is our main result in this section. Unlike the case of zeta, where none of the $\rho$'s is real, $L(s)$ might have real nontrivial zeros. So,
care is needed to allow for this possibility. 
The following lemma will facilitate the proof of Theorem~\ref{main theorem l}. Recall 
that the function $\phi$ was defined in \eqref{phi def}, and that
$$\Re\,\left[\frac{1}{\rho-\delta}+\frac{1}{1-\rho-\delta}\right]= 
\phi(\beta,\gamma,\delta).$$

\begin{theorem}\label{main theorem l}
    Let $\delta$ be a real nonpositive number and let $\tau$ be a real positive number. Let $\mathcal{Z}$ be a set of nonempty disjoint subintervals of the form $[\gamma_-,\gamma_+]\subseteq [0,\tau]$ or of the form $[-\gamma_0,\gamma_0]\subseteq [-\tau,\tau]$. Suppose that $\xi_L(1/2+it)$ has a sign change in each subinterval in $\mathcal{Z}$.\footnote{This means $\xi_L(1/2+i\tau_1)<0<\xi_L(1/2+i\tau_2)$ for some $\tau_1,\tau_2$ in each subinterval in question.}. Define 
    $$C(\mathcal{Z},\delta):=\sum_{[\gamma_-,\gamma_+]\in \mathcal{Z}} \frac{1-2\delta}{(1/2-\delta)^2+\gamma_+^2} 
    + \sum_{[-\gamma_0,\gamma_0]\in \mathcal{Z}} \frac{1/2-\delta}{(1/2-\delta)^2+\gamma_0^2}.$$
    For any real positive number $\eta$, any positive integer $m$, and with $\phi$ as in \eqref{phi def},
    define
    \begin{align*}
       f_1(\eta,\delta,m) &:=2m\cdot \min\left(\phi(0,\eta,\delta),\phi(1/2,\eta,\delta)\right), \\
       f_2(\eta,\delta,m) &:= m\cdot \phi(1/2,\eta,\delta), \\
       h_1(\delta,m) &:= m\cdot\phi(1/2,0,\delta), \\
       h_2(\delta,m) &:=m/2\cdot \phi(1/2,0,\delta), \\
       F(\eta,\delta) &:= \min\left(2\cdot\phi(0,\eta,\delta),\phi(1/2,\eta,\delta),1/2\cdot \phi(1/2,0,\delta)\right).
    \end{align*}
    Then, we have the following, where zeros are counted with multiplicity in all cases.
    \begin{itemize}
        \item[(i)] If
    $f_1(\eta,\delta,m) +C(\mathcal{Z},\delta) > w_{1,\delta}$,
    then there are strictly fewer than $4m$ non-real $\rho$'s off the critical line of height $\le \eta$.
     \item[(ii)] If
    $f_2(\eta,\delta,m) +C(\mathcal{Z},\delta) > w_{1,\delta}$,
    then there are strictly fewer than $2m$ non-real $\rho$'s on the critical line of height $\le\eta$ not accounted for in $\mathcal{Z}$.
      \item[(iii)] If $h_1(\delta,m) +C(\mathcal{Z},\delta) > w_{1,\delta}$,
    then there are strictly fewer than $2m$ real $\rho$'s off the critical line.
     \item[(iv)]If
    $h_2(\delta, m) +C(\mathcal{Z},\delta) > w_{1,\delta}$,
    then a zero at the central point $s=1/2$ has multiplicity strictly less than $m$.
    \item[(v)] If $F(\eta,\delta) +C(\mathcal{Z},\delta) > w_{1,\delta}$,
    then the list of zeros in $\mathcal{Z}$ is complete. This means that 
    every $\rho$ in the upper half-plane of height $\le \eta$ 
     is on the critical line, is simple, and belongs to some subinterval in the set $\mathcal{Z}$. 
     \end{itemize}
\end{theorem}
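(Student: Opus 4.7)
The strategy starts from Lemma~\ref{k power l} at $k=1$, which gives
$$w_{1,\delta}=\sum_{\rho}\frac{1}{\rho-\delta},$$
the sum taken by pairing $\rho$ with $\overline{\rho}$. Since the zero set is also closed under $\rho\mapsto 1-\rho$ by the functional equation, I would regroup the nontrivial zeros into orbits under the combined symmetries. The four orbit types, and their contributions to $w_{1,\delta}$, are: non-real off-line quadruples $\{\rho,\overline\rho,1-\rho,1-\overline\rho\}$ contribute $2\phi(\beta,\gamma,\delta)$; non-real on-line conjugate pairs at $1/2+i\gamma$ contribute $\phi(1/2,\gamma,\delta)$; real off-line pairs $\{\beta,1-\beta\}$ contribute $\phi(\beta,0,\delta)$; and a zero at the central point $s=1/2$ of multiplicity $m'$ contributes $m'/(1/2-\delta)=(m'/2)\phi(1/2,0,\delta)$. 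Since $\delta\le 0$ and $0\le\beta\le 1$, every orbit contribution is nonnegative, so summing over any subfamily of orbits produces a valid lower bound for $w_{1,\delta}$.

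Next I would lower-bound the contribution from $\mathcal{Z}$. For an interval $[\gamma_-,\gamma_+]\subseteq[0,\tau]$, the sign change of $\xi_L(1/2+it)$ and the intermediate value theorem supply an odd-multiplicity on-line zero at some $\gamma\in[\gamma_-,\gamma_+]$; together with its conjugate it contributes $\phi(1/2,\gamma,\delta)\ge\phi(1/2,\gamma_+,\delta)=(1-2\delta)/((1/2-\delta)^2+\gamma_+^2)$ by the monotonicity in Lemma~\ref{phi lemma}(iv). For a symmetric interval $[-\gamma_0,\gamma_0]$, the odd-multiplicity zero is either at $t=0$ (contributing $1/(1/2-\delta)$) or at some $t\ne 0$ paired with $-t$ (contributing $\phi(1/2,|t|,\delta)\ge\phi(1/2,\gamma_0,\delta)$); the common lower bound $(1/2-\delta)/((1/2-\delta)^2+\gamma_0^2)$ is dominated by each of these, and summing over intervals yields $C(\mathcal{Z},\delta)$.

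With this setup, each of (i)--(v) becomes a case analysis showing that the hypothetical ``extra'' zeros would force $w_{1,\delta}\ge C(\mathcal{Z},\delta)+(\text{hypothetical contribution})$, contradicting the stated inequality. For (i), $m$ off-line non-real quadruples of height $\le\eta$ contribute at least $2m\min(\phi(0,\eta,\delta),\phi(1/2,\eta,\delta))=f_1$, using Lemma~\ref{phi lemma}(i)--(ii) to minimize over $\beta\in[0,1]$ and (iv) to minimize over $\gamma\in(0,\eta]$. For (ii), $m$ additional on-line pairs contribute at least $m\phi(1/2,\eta,\delta)=f_2$. Parts (iii) and (iv) evaluate $\phi$ at $\gamma=0$ analogously, using Lemma~\ref{phi lemma}(i) (with $\eta=0\le\sqrt{\delta(\delta-1)/3}$) to bring $\beta$ to $1/2$. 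For (v), the negation asserts that some upper half-plane zero of height $\le\eta$ is off-line (real or non-real), non-simple, or missing from $\mathcal{Z}$; one verifies case-by-case that each scenario delivers at least $F(\eta,\delta)$ by taking the relevant bound from (i)--(iv) together with its real and central-point analogs.

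The main obstacle is not analytic but the careful orbit bookkeeping for (v): one must check that each degenerate scenario (quadruples collapsing to pairs when $\beta=1/2$ or $\gamma=0$, pairs collapsing to singletons at $s=1/2$) still delivers at least $F(\eta,\delta)$, and that the hypothetical ``extra'' zeros in (ii) and (iv) are genuinely disjoint from those already contributing to $C(\mathcal{Z},\delta)$. Lemma~\ref{phi lemma} does essentially all of the analytic work through its minimization and monotonicity properties for $\phi$.
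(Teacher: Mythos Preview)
Your proposal is correct and follows essentially the same approach as the paper: organize the nontrivial zeros into symmetry orbits, observe that each orbit contributes a nonnegative amount to $w_{1,\delta}$, lower-bound the contribution of the zeros in $\mathcal{Z}$ by $C(\mathcal{Z},\delta)$ via the monotonicity of $\phi$ in its second argument, and then for each of (i)--(v) assume the relevant extra zeros exist and use Lemma~\ref{phi lemma} to bound their contribution below by $f_1$, $f_2$, $h_1$, $h_2$, or $F$ respectively, yielding a contradiction. Your treatment is in fact slightly more explicit than the paper's in two places: you spell out the justification of the lower bound $C(\mathcal{Z},\delta)$ (particularly the case split for symmetric intervals $[-\gamma_0,\gamma_0]$), and you set up the orbit decomposition globally at the outset rather than case by case.
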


\begin{remark}
There are important cases where the subintervals in $\mathcal{Z}$ 
should be allowed to appear with multiplicity. In such cases, 
the conclusions about the simplicity of zeros in parts (ii) and (iv--v) should be modified 
so as to account for
any nonsimple zeros already present in $\mathcal{Z}$.
For example, the $L$-function of an elliptic curve with analytic rank $>1$ has by definition 
a zero at $s=1/2$ of multiplicity $>1$. 
Note that in this case, part (iv) of the theorem 
gives an unconditional upper bound on the analytic rank of the elliptic curve. 
Bober~\cite{bober_2013} gave a
method to bound the analytic rank of elliptic curve $L$-functions via the ``explicit formula,'' 
conditional on the RH for the corresponding $L$-function.
\end{remark}

\begin{remark}
Let us explicitly note that if $\epsilon = \pm 1$, then $\xi(1/2 + it)$ is even, so 
any zero at the central point $s=1/2$ has even multiplicity.
Thus, in this situation, it is unclear how the non-simple 
zero at $s=1/2$ can be detected rigorously by numerical means, via the intermediate value theorem, 
as there will be no sign change to detect. All this is to say that 
if $\epsilon = \pm 1$ and the zeros of height $\le \tau$ 
have been sufficiently resolved, then the sum over intervals of the form $[-\gamma_0,\gamma_0]\in \mathcal{Z}$ 
is expected to be empty.
\end{remark}

\begin{proof}
   We prove part (i). Let $\{\rho_1,\ldots, \rho_m\}$ be a set of $m$ zeros $\rho_j = \beta_j + i\gamma_j$ off of the critical line of height $\le \eta$ with $\Re(\rho_j) > \frac12$ and $\Im(\rho_j) > 0$, possibly with repetition up to multiplicity. For each $\rho_j$, there are necessarily 4 symmetric, distinct zeros $\rho_j$, $1-\rho_j$, $\overline{\rho_j}$, and $1-\overline{\rho_j}$. These 4 counterexample zeros will collectively contribute to $w_{1,\delta}$ a value of
   $$
   \phi(\beta_j, \gamma_j, \delta) + \phi(\beta_j, -\gamma_j, \delta) = 2\phi(\beta_j,\gamma_j,\delta).
   $$
   Using lemma \ref{phi lemma} and the monotonicity of $\phi(\beta,\eta,x)$ in $\eta$, we find that the minimum of the possible contribution from these 4 counterexample $\rho_j$ is at least
   $$
2\cdot \min_{\beta\in [0,1]}\phi(\beta,\eta,\delta) = 2\cdot \min(\phi(0,\eta,\delta),\phi(1/2,\eta,\delta)) = f_1(\eta,\delta,1).
   $$
   Note that this lower bound is independent of $\beta_i$ and $\gamma_i$. Thus, the $m$ counterexample zeros $\rho_1,\ldots, \rho_m$ along with their $3m$ symmetric zeros contribute at least $m\cdot f_1(\eta,\delta,1) = f_1(\eta,\delta, m)$ to the value of $w_{1,\delta}$. Thus, if
   $$f_1(\eta,\delta,m) + C(\mathcal{Z},\delta) > w_{1,\delta},$$
   then we have a contradiction, so there are strictly fewer than $4m$ zeros $\rho$ off the critical line of positive 
   height $\le \eta$. In the case of $m=1$ this means the RH holds in the interval $(0,\eta]$. In the case of $m=2$, 
   there is at most one set of 4 symmetric, non-real $\rho$ off the critical line of height $\le \eta$ and they must be simple.
   
   Next, we prove part (ii). Note that the minimum contribution to $w_{1,\delta}$ from a non-real zero $\rho$ on the critical line of height $\le \eta$ together with its symmetric part $1-\rho$ is $\phi(1/2,\eta,\delta)$. Similarly, the contribution from $m$ such zeros on the critical line is at least $m\cdot\phi(1/2,\eta,\delta) = f_2(\eta,\delta,m)$. Therefore, if
   $$f_2(\eta,\delta,m) + C(\mathcal{Z},\delta) > w_{1,\delta},$$
   then we have a contradiction, so there are strictly fewer than $2m$ zeros $\rho$ on the critical line of positive height $\le \eta$ which are not accounted for in the subintervals of $\mathcal{Z}$. In the case of $m=1$ this means the intervals in $\mathcal{Z}$ account for all non-real $\rho$ on the critical line of height $\le\eta$. In the case of $m=2$, at most one pair of non-real $\rho$ and $1-\rho$ on the critical line of height $\le\eta$ are not accounted for in $\mathcal{Z}$. Since each subinterval of $\mathcal{Z}$ contains a sign-change (so corresponds to a zero of odd multiplicity), this case implies that all non-real $\rho$ on the critical line of height $\le\eta$ are simple (including the possible pair $\rho$ and $1-\rho$ missed by $\mathcal{Z})$. 

   We prove part (iii). By Lemma~\ref{phi lemma}, the minimum of the contribution to $w_{1,\delta}$ from a real zero $\rho$ off the critical line and its symmetric zero $1-\rho$ is $\phi(1/2,0,\delta)$. Thus, the contribution from $m$ pairs of real zeros off the critical line $\{\rho_1,1-\rho_1,\ldots, \rho_m,1-\rho_m\}$ is at least $m\cdot \phi(1/2,0,\delta) = h_1(\delta,m)$. So if 
   $$h_1(\delta,m) + C(\mathcal{Z},\delta) > w_{1,\delta},$$ 
   there are strictly fewer than $m$ such pairs of real zeros, so fewer than $2m$ total real zeros off the critical line. 
   
   We prove part(iv). Each repetition of the zero $\rho=1/2$ (possibly none) 
   contributes $1/2\cdot\phi(1/2,0,\delta)$ to $w_{1,\delta}$. So, if the zero at $s=1/2$ has multiplicity $m$, then these zeros have total contribution $m/2\cdot\phi(1/2,0,\delta) = h_2(\delta,m)$. By the same arguments thus far, if
   $$h_2(\delta,m) + C(\mathcal{Z},\delta) > w_{1,\delta},$$ 
   then the multiplicity of the zero at $s=1/2$ is strictly smaller than $m$. 
   In the case $m=1$, this means we have non-vanishing of $L(s)$ on the real line. In the case $m=2$, and combined with part (iii), 
   any real zero must be at the central point $s=1/2$ and must be simple.

  Lastly, we prove part (v). Suppose 
  $F(\eta,\delta) + C(\mathcal{Z}, \delta) > w_{1,\delta}$.
  By the definition of $F(\eta,\delta)$, we have
       $$F(\eta,\delta) \le \min\{f_1(\eta,\delta,1),f_2(\eta,\delta,1),h_2(\delta,1)\}.$$
   Therefore, by part (i) of the theorem, all non-real $\rho$ are on the critical line. By part (ii), all non-real $\rho$ on the critical line belong to some subinterval in the set $\mathcal{Z}$ and are thus simple. By parts (iii) and (iv), $L(s)$ is non-vanishing on the real line (except for possibly a simple zero at $\rho=1/2$ included in $\mathcal{Z}$). These three cases leave no room for zeros outside of the simple zeros within the subintervals in the set $\mathcal{Z}$. Thus, the list of zeros in $\mathcal{Z}$ account for all zeros of $L(s)$ of height $\le\eta$ and they are all simple, i.e. the list $\mathcal{Z}$ is complete.
\end{proof}

\section{Generalization in the zeta case}\label{zeta case}

Let
$g(s):=(s-1)\zeta(s)$. So, $g$ is an entire function. 
The series expansion of $g$ at $s=1$ is given by
\begin{equation}
    g(s)=1+\sum_{j\ge 0} \frac{(-1)^j\lambda_j}{j!}(s-1)^{j+1},
\end{equation}
where $\lambda_1,\lambda_2,\ldots$ are the Stieltjes constants.\footnote{For instance, 
$\lambda_1=-0.07281584\ldots$, $\lambda_2=-0.00969036\ldots$, $\lambda_3 = 0.00205383\ldots$,
and so on.} For any complex number $z$ such that $\zeta(1-z)\ne 0$, we may write
\begin{equation*}
    \log g(s-z)=\sum_{j\ge 0} c_{j,z} (s-1)^j,\qquad c_{j,z}=\frac{1}{j!}\left[\frac{d^j}{ds^j}\log g(s-z)\right]_{s=1},
\end{equation*}
for $s$ sufficiently close to $1$.\footnote{If $z=0$, then the coefficients $c_j:=c_{j,0}$ can be calculated 
easily in terms of the $\lambda_j$'s. For example, $c_0=0,\,
        c_1=\lambda_0,\, c_2=-\lambda_0^2/2-\lambda_1,\,
        c_3=\lambda_0^3/3+\lambda_0\lambda_1+\lambda_2/2\ldots$.}

\begin{lemma}\label{k power}
Let $k$ be a positive integer. Let $z=x+iy$ be a complex number such that $x<1$ and $z$ does not 
coincide with any zero $\rho$ of $\xi(s)$. If $k>1$ then
$$v_{k,z}:=\sum_{\rho} \frac{1}{(\rho-z)^k}=(-1)^{k-1}\left[\frac{\psi_{k-1}(3/2-z/2)}{2^k(k-1)!}+k c_{k,z}\right].$$
If $k=1$, then there is an additional term of 
$$\displaystyle -\frac{1}{2}\log\pi.$$
\end{lemma}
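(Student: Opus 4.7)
The plan is to follow the same derivation as Lemma~\ref{k power l}, with $g(s)$ playing the role of $L(s)$ and with the archimedean factor specialized to $\pi^{-s/2}\Gamma(s/2+1)$. Since $\xi$ is entire of order $1$ with Hadamard product \eqref{hadamard product}, for $s$ in a small enough neighborhood of $0$ (with $z$ avoiding the $\rho$'s) one has
\[
\log\xi(s+z)-\log\xi(z)=-\sum_{k\ge 1}\frac{v_{k,z}}{k}\,s^k,
\]
obtained by expanding each $\log(1-(s+z)/\rho)$ in $s$, paired according to the convention that defines $v_{k,z}$. Reading off coefficients gives the identity $-v_{k,z}/k=\frac{1}{k!}[d^k/ds^k\log\xi(s+z)]_{s=0}$, which I would take as the starting point.

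Next, I would apply the functional equation $\xi(s+z)=\xi(1-s-z)$ and use the chain rule (the map $s\mapsto 1-s-z$ has derivative $-1$) to rewrite this as
\[
-\frac{v_{k,z}}{k}=\frac{(-1)^k}{k!}\left[\frac{d^k}{ds^k}\log\xi(s-z)\right]_{s=1}.
\]
I would then expand $\log\xi(s-z)=-\tfrac{s-z}{2}\log\pi+\log\Gamma((s-z)/2+1)+\log g(s-z)$ and differentiate each piece at $s=1$ separately: the linear term contributes $-\tfrac{1}{2}\log\pi$ only when $k=1$; repeated chain-rule on the $\Gamma$-piece produces $\frac{1}{2^k k!}\psi_{k-1}(3/2-z/2)$; and the $g$-piece produces $c_{k,z}$ directly from its defining Taylor expansion. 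Collecting everything and multiplying through by $-k$ yields
\[
v_{k,z}=\mathds{1}_{k=1}\cdot\left(-\tfrac{1}{2}\log\pi\right)+(-1)^{k-1}\left[\frac{\psi_{k-1}(3/2-z/2)}{2^k(k-1)!}+k\,c_{k,z}\right],
\]
which is exactly the claim.

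All the analytic bookkeeping is handled by the stated hypotheses: $z\ne\rho$ makes $\log\xi(s+z)$ analytic at $s=0$ and lets the Hadamard series converge under the given pairing, while $x<1$ together with $z\ne\rho$ keeps $g(1-z)$ nonzero (the zeros of $g$ are the nontrivial zeros of $\zeta$ and the trivial zeros $-2n$, neither of which can equal $1-z$ under these hypotheses) and keeps $3/2-z/2$ away from the nonpositive integers where $\psi_{k-1}$ would blow up. So there is no genuine analytic obstacle; the only delicate step is correctly tracking the signs and factors of $2$ produced by the two-fold chain rule on the $\Gamma$-factor and the functional-equation sign flip, which is what the computation above carries out.
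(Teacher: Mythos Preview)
Your proof is correct and follows exactly the approach the paper has in mind: the paper's own proof of this lemma is a one-line reference back to Lemma~\ref{k power l}, noting only that the coefficients $c_{j,z}$ differ from the $d_{j,\delta}$ because of the pole of $\zeta$, and your write-up spells out precisely that parallel computation with $g(s)=(s-1)\zeta(s)$ in place of $L(s)$ and $\pi^{-s/2}\Gamma(s/2+1)$ as the archimedean factor. Your analytic bookkeeping (that $x<1$ and $z\ne\rho$ keep $g(1-z)\ne 0$ and $3/2-z/2$ off the poles of $\psi_{k-1}$) is also correct.
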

\begin{proof}
The proof is similar to that of Proposition~\ref{k power l} 
except the coefficients $c_{j,z}$ are defined differently than the 
analogous coefficients $d_{j,\delta}$ 
due to the pole of zeta.
\end{proof}

\begin{corollary}\label{v1z}
When $k=1$, we have
    $$ v_{1,z}= - \frac{1}{2}\log \pi+
    \frac{1}{2}\psi_0\left(\frac{3-z}{2}\right)+
    \frac{g'(1-z)}{g(1-z)}.$$
\end{corollary}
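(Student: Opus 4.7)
The plan is to read Corollary~\ref{v1z} as the direct $k=1$ specialization of Lemma~\ref{k power}, so almost all of the work is already carried out in the proof of that lemma. First I would substitute $k=1$ into the displayed formula: the sign $(-1)^{k-1}$ collapses to $+1$, the denominator $2^{k}(k-1)!$ becomes $2$, the subscript $k-1$ on the polygamma becomes $0$, and the factor $k$ in front of $c_{k,z}$ becomes $1$. After including the extra term $-\tfrac{1}{2}\log\pi$ that Lemma~\ref{k power} flags for the $k=1$ case (and which arises from differentiating the $-\tfrac{s}{2}\log\pi$ contribution in $\log\xi(s-z)$), one obtains
$$v_{1,z} \;=\; -\frac{1}{2}\log\pi \;+\; \frac{1}{2}\psi_{0}\!\left(\frac{3-z}{2}\right) \;+\; c_{1,z}.$$

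Next I would identify $c_{1,z}$ in closed form. Because $c_{j,z}$ is defined by the Taylor expansion $\log g(s-z)=\sum_{j\ge 0} c_{j,z}(s-1)^{j}$, differentiating once and setting $s=1$ gives
$$c_{1,z} \;=\; \left[\frac{d}{ds}\log g(s-z)\right]_{s=1} \;=\; \frac{g'(1-z)}{g(1-z)}.$$
Substituting this into the previous display yields precisely the formula stated in the corollary.

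There is no real obstacle beyond bookkeeping; the only point worth checking is that $c_{1,z}$ is well-defined, i.e.\ that $g(1-z)\ne 0$. Since $g(s)=(s-1)\zeta(s)$ is entire, the only way $g(1-z)$ can vanish is if $\zeta(1-z)=0$ with $1-z\ne 1$. The hypothesis $x<1$ forces $\normalfont{\Re}(1-z)>0$, which rules out $1-z$ being a trivial zero of $\zeta$. The assumption that $z$ does not coincide with any zero $\rho$ of $\xi$ then rules out $1-z$ being a nontrivial zero, by the symmetry $\xi(s)=\xi(1-s)$: if $1-z$ were a nontrivial zero then so would $z$ be, contradicting the hypothesis. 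Hence $g(1-z)\ne 0$ and the computation goes through, completing the plan.
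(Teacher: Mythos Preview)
Your proposal is correct and follows exactly the approach the paper intends: the corollary is an immediate specialization of Lemma~\ref{k power} at $k=1$, together with the identification $c_{1,z}=g'(1-z)/g(1-z)$ from the definition of the Taylor coefficients. The paper gives no separate proof of the corollary, so your write-up, including the well-definedness check for $g(1-z)$, is if anything more thorough than what the paper records.
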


One can compute the $g'(1-z)/g(1-z)$
using the Euler--Maclaurin summation formula; see for example~\cite{rubinstein_2005}. 
However, if $x<0$ is large enough, then the following simpler formula could suffice, 
and has the same proof as that for Lemma~\ref{1st log deriv 1}.

\begin{lemma}\label{1st log deriv zeta}
If $z=x+iy$ and $x< 0$, then
$$\frac{g'(1-z)}{g(1-z)} = -\frac{1}{z}-\sum_{k= 1}^K \frac{\Lambda(k)}{k^{1-z}} + \mathcal{R}(K,x),$$
where $\mathcal{R}(K,x)$ satisfies the same bound as in Lemma~\ref{1st log deriv 1} but with $r=1$ and $\delta=x$.
\end{lemma}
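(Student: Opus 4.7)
My plan is to reduce the lemma to Lemma~\ref{1st log deriv 1}. The only novel feature here is the presence of the pole of $\zeta(s)$ at $s=1$, which is precisely what the factor $(s-1)$ in $g(s)$ is designed to remove. Taking logarithmic derivatives of $g(s)=(s-1)\zeta(s)$ gives the identity
$$\frac{g'(s)}{g(s)} = \frac{1}{s-1} + \frac{\zeta'(s)}{\zeta(s)},$$
valid away from $s=1$ and away from zeros of $\zeta$. Substituting $s=1-z$ immediately produces the term $1/(s-1)=-1/z$, which accounts for the explicit $-1/z$ in the claimed formula.

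Next, since $x<0$, we have $\Re(1-z)=1-x>1$, so $1-z$ lies in the half-plane of absolute convergence of the Dirichlet series for $\zeta'/\zeta$. By the Euler product for $\zeta$,
$$\frac{\zeta'(1-z)}{\zeta(1-z)} = -\sum_{k\ge 1}\frac{\Lambda(k)}{k^{1-z}},$$
absolutely. Truncating at $k=K$ defines the remainder $\mathcal{R}(K,x)$ as the negative of the tail from $k>K$, giving the displayed formula.

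It remains to bound $|\mathcal{R}(K,x)|$. The argument is identical to the one in the proof of Lemma~\ref{1st log deriv 1}, specialized to $r=1$: since $|k^{1-z}|=k^{1-x}$, we have
$$|\mathcal{R}(K,x)| \le \int_K^{\infty}\frac{1}{u^{1-x}}\,d\psi(u),$$
after which integration by parts and Rosser's explicit Chebyshev bound
$$u(1-2.85/\log K)\le \psi(u)\le u(1+2.85/\log K)$$
for $u\ge K\ge 18$ yield exactly the estimate of Lemma~\ref{1st log deriv 1} with $r=1$ and $\delta$ replaced by $x$. There is no real obstacle here; the only thing worth flagging is the bookkeeping check that the sign conventions and the identification $\delta\leftrightarrow x$ match, so that the bound transfers verbatim. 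In particular, $\Lambda_L$ reduces to the ordinary von Mangoldt function $\Lambda$ in the zeta case, which removes the factor of $r$ used to control $|\Lambda_L(k)|\le r\Lambda(k)$ in Lemma~\ref{1st log deriv 1}.
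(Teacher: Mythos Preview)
Your proof is correct and matches the paper's approach exactly: the paper simply states that this lemma ``has the same proof as that for Lemma~\ref{1st log deriv 1},'' and you have spelled out precisely that reduction, including the extra $-1/z$ term coming from the logarithmic derivative of the factor $(s-1)$ in $g(s)$ and the observation that $|k^{1-z}|=k^{1-x}$ so the tail bound depends only on $x$.
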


Theorem~\ref{main theorem zeta} is the main result in this section. Since the main interest in the case of zeta is at large heights, we expand about a complex number $z=x+iy$ where $y>0$ is typically large. Therefore, 
 the advantage provided by the symmetry of the $\rho$'s about the real axis is mostly lost. 

\begin{theorem}\label{main theorem zeta}
    Let $z=x+iy$ be a complex number such that $x\le 0$ and $y>0$. 
    Let $\tau$ be a real positive number such that 
    $\tau\le y$. Let $\mathcal{Z}$ be a set of nonempty disjoint subintervals $[\gamma_-,\gamma_+]\subseteq [y-\tau,y+\tau]$ such that $\xi(1/2+it)$ has a sign change in each subinterval. 
    Suppose further that $y$ does not belong to any of the subintervals in $\mathcal{Z}$.
     Define
     \begin{align*}
     D(\mathcal{Z},z)&:=\sum_{\substack{[\gamma_-,\gamma_+]\in \mathcal{Z}\\ \gamma_->y}} \frac{1/2-x}{(1/2-x)^2+(\gamma_+-y)^2}+\sum_{\substack{[\gamma_-,\gamma_+]\in \mathcal{Z}\\ \gamma_+< y}} \frac{1/2-x}{(1/2-x)^2+(y-\gamma_-)^2}. 
     \end{align*}
     Further, for any real $\eta$ such that $0<\eta\le y$, and with $\phi$ as in \eqref{phi def}, define
    \begin{align*}
g_1(\eta,x)&:=\min\left(\phi(0,\eta,x),\phi(1/2,\eta,x)\right), \\
        g_2(\eta,x)&:= \phi(1/2,\eta,x), \\
        g_3(\eta,x) &:=\min\left(\phi(0,\eta,x),1/2 \cdot \phi(1/2,\eta,x)\right).
    \end{align*}
    Then, for any real positive number $\eta$ such that $\eta\le y$ we have the following.
     \begin{itemize}
        \item[(i)] If
    $g_1(\eta,x) +D(\mathcal{Z},z) > \normalfont{\Re}\,(v_{1,z})$,
    then all the $\rho$'s with height in $[y-\eta,y+\eta]$ are on the critical line.
    That is, the RH holds in the interval $[y-\eta,y+\eta]$.
    \item[(ii)] If
    $g_2(\eta,x) +D(\mathcal{Z},z) > \normalfont{\Re}\,(v_{1,z})$,
    then all the $\rho$'s on the critical line with height in $[y-\eta,y+\eta]$ are simple.
    \item[(iii)] If $g_3(\eta,x) +D(\mathcal{Z},z) > \normalfont{\Re}\,(v_{1,z})$,
    then the list $\mathcal{Z}$ is complete. This means that  
    every $\rho$ in the upper half-plane with height in $[y-\eta,y+\eta]$
     is on the critical line, is simple, and belongs to some subinterval in the set $\mathcal{Z}$. 
     \end{itemize}
\end{theorem}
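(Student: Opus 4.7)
The plan is to adapt the proof of Theorem~\ref{main theorem l} to a complex expansion point $z=x+iy$ with $x\le 0$ and $y>0$. Starting from Corollary~\ref{v1z}, equivalently $v_{1,z}=-\xi'(z)/\xi(z)$ via the Hadamard product for $\xi$, I would write
\begin{equation*}
\Re(v_{1,z})=\sum_{\gamma>0}\left[\frac{\beta-x}{(\beta-x)^2+(\gamma-y)^2}+\frac{\beta-x}{(\beta-x)^2+(\gamma+y)^2}\right],
\end{equation*}
where the sum runs over nontrivial zeros $\rho=\beta+i\gamma$ in the upper half-plane paired in the Hadamard product with their conjugates $\overline{\rho}$. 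Under $x\le 0$ and $0\le\beta\le 1$ each summand is nonnegative, so discarding any subset of pairs yields a valid lower bound on $\Re(v_{1,z})$. The $D(\mathcal{Z},z)$ contribution is recovered from the known zeros: for each $[\gamma_-,\gamma_+]\in\mathcal{Z}$ the sign-change hypothesis together with the intermediate value theorem forces a critical-line zero $\rho^\ast=1/2+i\gamma^\ast$, which contributes at least $(1/2-x)/((1/2-x)^2+(\gamma^\ast-y)^2)$ after dropping the nonnegative $(\gamma^\ast+y)^2$ term; the assumption $y\notin[\gamma_-,\gamma_+]$ then lets me replace $(\gamma^\ast-y)^2$ by the larger endpoint-squared distance $(\gamma_+-y)^2$ (if $\gamma_->y$) or $(y-\gamma_-)^2$ (if $\gamma_+<y$), recovering exactly the terms in $D(\mathcal{Z},z)$.

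For part (i), a counter-example $\rho_0=\beta_0+i\gamma_0$ with $\beta_0\ne 1/2$ and $\gamma_0\in[y-\eta,y+\eta]$ produces, via the functional equation, the symmetric quadruple $\{\rho_0,\overline{\rho_0},1-\rho_0,1-\overline{\rho_0}\}$, whose upper-half-plane portion consists of two conjugate pairs at ordinate $\gamma_0$ contributing a total of $\phi(\beta_0,\gamma_0-y,x)+\phi(\beta_0,\gamma_0+y,x)$. Dropping the second term, applying Lemma~\ref{phi lemma}(iv) with $|\gamma_0-y|\le\eta$ (monotonicity in the frequency argument), and then Lemma~\ref{phi lemma}(i)--(ii) (minimization over $\beta_0$) produces the lower bound $g_1(\eta,x)$, which added to $D(\mathcal{Z},z)$ contradicts the hypothesis. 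For part (ii), a critical-line zero $\rho^\ast=1/2+i\gamma^\ast$ of multiplicity $m\ge 2$ with $\gamma^\ast\in[y-\eta,y+\eta]$ yields (together with $\overline{\rho^\ast}$ of the same multiplicity) $m$ conjugate pairs contributing at least $2(1/2-x)/((1/2-x)^2+\eta^2)=\phi(1/2,\eta,x)=g_2(\eta,x)$ after dropping the $(\gamma^\ast+y)^2$ terms and using $m\ge 2$; a brief case analysis on the parity of $m$ and on whether $\rho^\ast$ lies in a subinterval of $\mathcal{Z}$ shows that at most one of these $m$ pairs can be absorbed into $D(\mathcal{Z},z)$, so at least one full pair's worth remains in excess in every case. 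Part (iii) then aggregates the three failure modes of completeness -- an off-critical counter-example contributing $\ge g_1(\eta,x)$, a nonsimple critical-line zero contributing $\ge g_2(\eta,x)$, and a simple critical-line zero not in any subinterval of $\mathcal{Z}$ contributing a single pair $\ge\tfrac12\phi(1/2,\eta,x)$ -- whose minimum equals $g_3(\eta,x)$.

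The most delicate step I anticipate is the bookkeeping in part (ii). The subtlety is that $D(\mathcal{Z},z)$ represents only one sign-change zero per subinterval, whereas a nonsimple zero supplies several copies; verifying that the argument goes through regardless of whether the nonsimple zero is itself the sign-change zero, and regardless of the parity of its multiplicity, is what underpins the $g_2(\eta,x)$ threshold rather than a smaller one. Everything else amounts to routine application of Lemma~\ref{phi lemma} together with the nonnegativity of $\phi$ on $[0,1]\times\mathbb{R}\times(-\infty,0]$.
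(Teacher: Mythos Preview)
Your approach is essentially the same as the paper's: lower-bound $\Re(v_{1,z})$ by the nonnegative contributions of the known sign-change zeros (giving $D(\mathcal{Z},z)$) plus the minimal contribution of a hypothetical extra zero, and use Lemma~\ref{phi lemma} for the minimization. Your pairing $(\rho,\overline{\rho})$ with the extra $(\gamma+y)^2$ terms dropped is equivalent to the paper's direct use of $\phi(\beta,\gamma-y,x)$ for the pair $(\rho,1-\overline{\rho})$.

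There is one slip in your part~(ii) bookkeeping. From ``at most one of these $m$ pairs can be absorbed into $D(\mathcal{Z},z)$'' you conclude ``at least one full pair's worth remains in excess,'' which yields only $\tfrac12\phi(1/2,\eta,x)$, not $g_2(\eta,x)=\phi(1/2,\eta,x)$. The case analysis you allude to actually gives the stronger conclusion you need: if $\gamma^\ast$ lies in a subinterval of $\mathcal{Z}$, the sign-change hypothesis forces $m$ to be odd, hence $m\ge 3$, so after one pair is absorbed at least $m-1\ge 2$ pairs remain; if $\gamma^\ast$ lies in no subinterval, nothing is absorbed and all $m\ge 2$ pairs remain. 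In either case at least \emph{two} pairs' worth remain in excess, yielding $2\cdot\tfrac12\phi(1/2,\eta,x)=g_2(\eta,x)$ as required. This is exactly how the paper handles it, and once you state the conclusion of the case analysis correctly your argument closes.
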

\begin{proof}
     Let us prove part (i). Suppose there is a counter-example $\rho=\beta+i\gamma$ such that $\gamma\in [y-\eta,y+\eta]$. Then $1-\overline{\rho}$ is a counter-example distinct from $\rho$. The contribution of $\rho$ and $1-\overline{\rho}$ to $\Re(v_{1,z})$ is $\phi(\beta,\gamma-y,x)$. Since $|\gamma-y|\le \eta$, it follows by Lemma~\ref{phi lemma} that this contribution is at least $g_1(\eta,x)$. Moreover, the zeros from the set $\mathcal{Z}$ already contribute at least $D(\mathcal{Z},z)$ to $\Re(v_{1,z})$. So, if the inequality in (i) holds, and considering that any remaining zeros will contribute a nonnegative amount to $\Re(v_{1,z})$, then we obtain a contradiction. Hence, the counter-example $\rho$ cannot exist.

     We prove part (ii). Suppose there is a nonsimple zero $\rho=1/2+i\gamma$ of multiplicity $m$ such that $\gamma\in [y-\eta,y+\eta]$. If $\rho$ is already in the set $\mathcal{Z}$, then $m\ge 3$, since the zeros in $\mathcal{Z}$ have odd multiplicity (as they correspond to sign changes of $\xi(1/2+it)$). If $\rho$ is not in $\mathcal{Z}$, then $m\ge 2$. In either case, there are at least two zeros on the critical line with ordinates in $[y-\eta,y+\eta]$ that are missing from $\mathcal{Z}$. So, arguing as in part (i) and using Lemma~\ref{phi lemma}, the contribution of these missing zeros to $\Re(v_{1,z})$ is at least $g_2(\eta,x)$. So, if the inequality in (ii) holds, then we obtain a contradiction since any remaining zeros will contribute a nonnegative amount to $\Re(v_{1,z})$. Hence, such a nonsimple  $\rho$ cannot exist.

      Lastly, we prove part (iii). Note that $g_1(\eta,x)\ge g_3(\eta,x)$ and $g_2(\eta,x)\ge g_3(\eta,x)$. So, if the inequality in (iii) holds, then all the zeros with height in $[y-\eta,y+\eta]$ are on the critical line and are simple. Thus, 
      in seeking a contradiction we may assume without loss of generality that there is a simple zero $\rho=1/2+i\gamma$ such that $\gamma\in [y-\eta,y+\eta]$ and $\gamma$ is not in any subinterval $[\gamma_-,\gamma_+]\in \mathcal{Z}$. 
      But the contribution of such $\rho$ to $\Re(v_{1,z})$ is at least $1/2 \cdot \phi(1/2,\eta,x)$. Hence, if the inequality in (iii) holds, then we obtain a contradiction, like before. So, such a missing $\rho$ cannot exist.
\end{proof}

\begin{remark}
By using the shift $z=-1/2+i 14.1$ in Theorem~\ref{main theorem zeta} along with the $12$ initial zeros of $\zeta(s)$, one can verify that $\rho_1=1/2+i\gamma_1$ and $\rho_2=1/2+i\gamma_2$ are the only zeta zeros with ordinates in the window $[6.5360, 21.6640]$. Since the value $v_1= 0.0230957\ldots$ that Riemann computed already tells us that there are no zeta zeros of height less than $6.56$, this yields that $\rho_1$ and $\rho_2$ are indeed the first two zeta zeros. By comparison, verifying $\rho_1$ and $\rho_2$ are the first two zeta zeros using just the value $v_1$ requires accounting for the contribution of $52$ initial zeros of zeta. 
\end{remark}

\section{Improvements}\label{improvements}

Instead of using nonnegativity to simply drop the contribution to $\Re(v_{1,z})$ 
of the tail of the zeros sum, 
we derive  a lower bound on the contribution of the tail.
 Incorporating this into Theorem~\ref{main theorem zeta}  greatly 
 improves the efficiency of our verification method 
 at large heights (i.e.\ when $y$ is large). Hence, 
the RH can be verified via our method 
in a much wider window than before (i.e.\ for a much larger $\eta$). Specifically, whereas the basic verification method in Theorem~\ref{main theorem zeta} is only expected to succeed 
in windows of size $\eta \ll \sqrt{\tau/\log y}$, the improved method in Theorem~\ref{zeta counterpart} 
 is expected to succeed in windows of size $\eta\gg \tau/\sqrt{\log y}$.
 
In addition, we derive an upper bound on the contribution of the tail of the zeros sum. 
This can sometimes allow us to prove the incompleteness of a supplied 
list of zeros in a given range, as shown in Theorem~\ref{zeta counterpart}.

\begin{proposition}\label{zeros sum bound}
Let $z=x+iy$ be a complex number and $\tau$ be a real number. 
Suppose that $x< 0$ and $1-2x<\tau< y$.
For any real number $c$ such that $168\pi<c<y-\tau$, we have
    $$ - b(z,\tau,c) \le \sum_{\substack{\rho\\ |\gamma-y|>\tau}}\normalfont{\Re}  \frac{1}{\rho-z}- \frac{1-2x}{2\pi \tau}\log\frac{y}{2\pi} \le B(z,\tau,c),$$
    where
    \begin{align*}
        &b(z,\tau,c):= \frac{1}{2\pi}\cdot \left[\epsilon_1\frac{1-2x}{\tau}+\epsilon_2+\epsilon_3\log \frac{y}{2\pi} \right]
+ \frac{\epsilon_4+\epsilon_5}{2},\\
 &B(z,\tau,c):= \frac{1}{2\pi}\cdot \left[\epsilon_1\frac{1-2x}{\tau}+\epsilon_2\right]
+ \frac{\epsilon_4+\epsilon_5}{2}+\epsilon_6,
    \end{align*}
and defining $\ell(u)$ and $\ell_1(u)$ as in \eqref{ell ell1} we have
\begin{align*}
    &\epsilon_1(y,\tau,c):=4\pi^2\cdot 0.006\cdot \left[\frac{1}{(y+\tau)^2}+\frac{1}{c^2}\right],\\
    &\epsilon_2(z,c):=\frac{1-2x}{2y}\cdot \log\frac{2y}{c},\\
    &\epsilon_3(z,\tau,c):= \left[\frac{(1-x)^2}{3\tau^3}+\frac{1}{y-c}\right]\cdot (1-2x),\\
    &\epsilon_4(z,\tau,c) := \left(\frac{2-4x}{\tau^2}+\frac{2-4x}{(y-c)^2}\right)\cdot \ell(2y),\\ 
    &\epsilon_5(z,\tau) := \frac{4-8x}{\tau^3} \cdot \ell_1(2y),\\
    &\epsilon_6(z,c) := \frac{1-2x}{2y}\cdot \frac{(2y-c)^2\log(2y-c)-(y-c)^2\log(y-c)}{\pi (y-c)^2}.
\end{align*}
\end{proposition}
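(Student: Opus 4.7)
My plan is to convert the tail sum over zeros into Stieltjes integrals against the zero-counting function $N(T)$ of $\zeta(s)$, use the Riemann--von Mangoldt formula to isolate the main term $\frac{1-2x}{2\pi\tau}\log\frac{y}{2\pi}$, and bound the remaining fluctuating and error pieces using the explicit $S$-bounds $|S(u)|\le\ell(u)$ and $|\int_{u_0}^u S(\nu)\,d\nu|\le\ell_1(u)$.

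First, I would use the functional-equation pairing $\rho\leftrightarrow 1-\overline{\rho}$ to write $\sum_\rho\Re\frac{1}{\rho-z}=\tfrac{1}{2}\sum_\rho\phi(\beta,\gamma-y,x)$ in the notation of \eqref{phi def}, where each nontrivial zero is counted once with multiplicity. By Lemma~\ref{phi lemma}(iii), once $|\gamma-y|>\tau>1-2x$ one has $\phi(\beta,\gamma-y,x)\le\phi(1/2,\gamma-y,x)=\frac{1-2x}{(1/2-x)^2+(\gamma-y)^2}$, which reduces the upper bound to an unconditional sum depending only on $\gamma$; the lower bound uses only $\phi\ge 0$ to discard off-line contributions, and this accounts for much of the asymmetry between $B$ and $b$. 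The tail then splits into three pieces: (a) $\gamma>y+\tau$, (b) $0<\gamma<y-\tau$, and (c) the entire lower half-plane (included in full since $\tau<y$ forces $|\gamma-y|>\tau$ for every $\gamma<0$); the conjugate symmetry $\rho\leftrightarrow\overline{\rho}$ turns (c) into a sum over positive $\gamma$ with $\gamma-y$ replaced by $\gamma+y$.

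Second, I would write (a), (b), (c) as Stieltjes integrals of $\frac{1-2x}{(1/2-x)^2+(t\mp y)^2}$ against $dN(t)$, and substitute $N(T)=\frac{T}{2\pi}\log\frac{T}{2\pi e}+\frac{7}{8}+S(T)+O(1/T)$. The smooth density contributes integrals of the form $\frac{1}{2\pi}\int f(t)\log(t/2\pi)\,dt$; evaluating the key antiderivative $\int_{|u|>\tau}\frac{1-2x}{(1/2-x)^2+u^2}\,du=2\pi-4\arctan\bigl(\tau/(1/2-x)\bigr)=\frac{2(1-2x)}{\tau}+O(\tau^{-3})$ and Taylor-expanding $\log(t/2\pi)$ around $t=y$ in the near region extracts the main term together with $\epsilon_3$. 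The auxiliary parameter $c$ splits integration at $|t-y|=c$: in the far region $|t-y|>c$ and in piece (c), cruder bounds on $f$ and $\log$ produce $\epsilon_2$, and in the upper bound a more careful treatment of the lower-half contribution produces $\epsilon_6$. For the $dS(t)$ part I would integrate by parts to obtain $-\int f'(t)S(t)\,dt$ bounded by $\ell(2y)\int|f'|\,dt$, giving $\epsilon_4$; a second integration by parts applied where $|f'|$ concentrates near $|t-y|=\tau$ passes to $\int_{u_0}^t S(\nu)\,d\nu$ bounded by $\ell_1(2y)$ and produces $\epsilon_5$. Residual boundary contributions from these integrations by parts, together with the $O(1/T)$ smoothness error in the Riemann--von Mangoldt formula, combine into $\epsilon_1$.

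The main obstacle will be the explicit bookkeeping of constants: tracking boundary terms after successive integrations by parts, choosing consistent cutoffs between the near, far, and lower-half regions, and replacing big-$O$'s with genuine numerical constants in each of $\epsilon_1$--$\epsilon_6$. The most delicate step is extracting exactly $\frac{1-2x}{2\pi\tau}\log\frac{y}{2\pi}$: both upper-half tails (a) and (b) together contribute the factor $(1-2x)/\tau$ (split evenly), while the lower-half piece (c) lives only at the smaller scale $(1-2x)/y$ and must be absorbed into $\epsilon_2$ (and into $\epsilon_6$ for $B$) rather than into the main term.
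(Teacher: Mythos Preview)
Your overall strategy matches the paper's---Stieltjes integrals against $N$, split into a smooth $\theta$-part and a fluctuating $S$-part, extract the main term, bound the $S$-part via integration by parts using $\ell$ and $\ell_1$---but two steps are not right. First, your lower bound is incomplete: saying you ``use only $\phi\ge 0$ to discard off-line contributions'' does not reduce the sum to one depending only on the ordinates $\gamma$, which is what you need in order to write it as an integral against $dN$. The paper instead invokes Lemma~\ref{phi lemma}(ii), namely $\phi(\beta,\gamma-y,x)\ge\phi(0,\gamma-y,x)$ once $|\gamma-y|>\tau$, to obtain $L(z,\tau)=\int_{|u-y|>\tau}\phi(0,u-y,x)\,dN(u)/2$ as a genuine lower bound; nonnegativity is then used only to shrink this integral to the near window $\tau<|u-y|<y-c$, and the resulting endpoint loss $1/(y-c)$ is precisely what puts the term $\epsilon_3\log(y/2\pi)$ into $b$ but not into $B$. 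Second, your attribution of $\epsilon_1$ is wrong. The paper does not use an approximate formula $N(T)=\frac{T}{2\pi}\log\frac{T}{2\pi e}+\frac{7}{8}+S(T)+O(1/T)$; it uses the exact identity $N(u)=\pi^{-1}\theta(u)+1+S(u)$ and then Lehman's explicit bound $\lvert\theta'(u)-\tfrac12\log(u/2\pi)\rvert\le 4\pi^2\cdot 0.006/u^2$. The constant $4\pi^2\cdot 0.006$ appearing in $\epsilon_1$ is exactly this Lehman constant, evaluated at the extreme arguments $u=y+\tau$ and $u=c$; no boundary terms from integration by parts feed into $\epsilon_1$.

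On organization: the paper handles your pieces (a), (b), (c) in one stroke by extending $N$ oddly to $u<0$ and substituting $u\leftarrow u-y$, which avoids treating the conjugate zeros separately. For the upper bound it writes $U=I_1+I_2$ with $I_2$ over the entire far region $|u-y|>y-c$ (not just the lower half-plane as you suggest), and bounds $I_2$ via integration by parts and the inequality $N(u)\le\frac{u}{2\pi}\log\frac{u}{2\pi}$; this is the source of $\epsilon_6$.
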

\begin{proof}
See \S{\ref{proposition 12 proof}}.
\end{proof}

\begin{theorem}\label{zeta counterpart}
    Let $z =x+iy$, $\tau$, $c$, and the functions $b(z,\tau,c)$ and $B(z,\tau,c)$ all be given as in Proposition \ref{zeros sum bound}. Furthermore, let $\mathcal{Z}$ and the functions $D(\mathcal{Z},z)$, $\phi(\beta,\eta,x)$, $g_1(\eta,x)$, $g_2(\eta,x)$, and $g_3(\eta,x)$ be given as in Theorem \ref{main theorem zeta}. Define
    \begin{align*}
        r(z,\tau,c) &:= \frac{1-2x}{2\pi\tau}\log\frac{y}{2\pi} - b(z,\tau,c)\\
        R(z,\tau,c) &:= \frac{1-2x}{2\pi\tau}\log\frac{y}{2\pi} + B(z,\tau,c)
    \end{align*}
    For any real positive number $\eta$ such that $\eta \le y$ we have the following improvements to Theorem~\ref{main theorem zeta}.
    \begin{itemize}
        \item[(i)] If
        $g_1(\eta,x) + D(\mathcal{Z},z) + r(z,\tau,c) > \normalfont{\Re}\,(v_{1,z})$,
        then all the $\rho$'s with height in $[y-\eta,y+\eta]$ are on the critical line. That is, the RH holds in $[y-\eta,y+\eta]$.
        \item[(ii)] If
        $g_2(\eta,x) +D(\mathcal{Z},z) + r(z,\tau,c) > \normalfont{\Re}\,(v_{1,z})$,
        then all the $\rho$'s on the critical line with height in $[y-\eta,y+\eta]$ are simple.
        \item[(iii)] If $g_3(\eta,x) +D(\mathcal{Z},z) + r(z,\tau,c) > \normalfont{\Re}\,(v_{1,z})$,
        then every $\rho$ in the upper half-plane with height in $[y-\eta,y+\eta]$
        is on the critical line, simple, and belongs to some subinterval in the set $\mathcal{Z}$. 
     \end{itemize}
     In addition to these improvements, the upper bound in Proposition \ref{zeros sum bound} yields the following counterpart.
     \begin{itemize}
         \item[(iv)] If $D(\mathcal{Z},z) + R(z,\tau,c) < \normalfont{\Re}\,(v_{1,z})$, then $\mathcal{Z}$ does not account for all the $\rho$'s with height in $[y-\tau,y+\tau]$. This means there is a subinterval in $\mathcal{Z}$ that contains the ordinates of at least three $\rho$'s (including multiplicity), or there is $\rho=1/2 + i\gamma$ such that $\gamma\in [y-\tau,y+\tau]$ and $\gamma$ is not in any subinterval in $\mathcal{Z}$,  or there is $\rho$ off the critical line with height in $[y-\tau,y+\tau]$. 
    \end{itemize}
\end{theorem}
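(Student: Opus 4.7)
The plan is to refine the proof of Theorem~\ref{main theorem zeta} by replacing the crude step ``drop the nonnegative tail of $\sum_{\rho}\Re(\rho-z)^{-1}$'' with the two-sided quantitative tail bound supplied by Proposition~\ref{zeros sum bound}. The starting point is the split
\[
\Re(v_{1,z}) \;=\; \sum_{|\gamma-y|\le\tau}\Re\frac{1}{\rho-z} \;+\; \sum_{|\gamma-y|>\tau}\Re\frac{1}{\rho-z},
\]
together with the sandwich $r(z,\tau,c) \le (\text{tail}) \le R(z,\tau,c)$ that follows immediately from Proposition~\ref{zeros sum bound} and the definitions of $r$ and $R$.

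For parts (i), (ii), and (iii), I would mirror the proof of the corresponding parts of Theorem~\ref{main theorem zeta} almost verbatim, the single change being that the tail, previously discarded via the termwise nonnegativity valid when $x\le 0$, is now replaced by the sharper lower bound $r(z,\tau,c)$. For instance in (i), a hypothetical counter-example $\rho=\beta+i\gamma$ off the critical line with $\gamma\in[y-\eta,y+\eta]$ pairs with $1-\overline{\rho}$ to contribute at least $\phi(\beta,\gamma-y,x)\ge g_1(\eta,x)$ by Lemma~\ref{phi lemma}; the sign-change subintervals of $\mathcal{Z}$ contribute at least $D(\mathcal{Z},z)$; and the tail now contributes at least $r(z,\tau,c)$. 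The displayed strict inequality then forces the total to exceed $\Re(v_{1,z})$, a contradiction. Parts (ii) and (iii) follow by the same template, substituting the counter-example scenarios and minimizations that produce $g_2$ and $g_3$ in place of $g_1$.

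For part (iv) the plan is to invoke the complementary upper bound on the tail. Assume, toward a contradiction, that $\mathcal{Z}$ does account for all $\rho$'s with height in $[y-\tau,y+\tau]$. Then the sign-change parity forces each subinterval to carry exactly one simple critical-line zero, and the remaining two failure modes being negated guarantees no other $\rho$ lies in the middle range. The resulting middle contribution is captured by $D(\mathcal{Z},z)$, and combining with $(\text{tail})\le R(z,\tau,c)$ produces the upper bound $\Re(v_{1,z})\le D(\mathcal{Z},z)+R(z,\tau,c)$. The contrapositive is exactly (iv).

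The main obstacle, aside from the analytic work already packaged inside Proposition~\ref{zeros sum bound}, is careful bookkeeping. Proposition~\ref{zeros sum bound} is applicable only under the parameter constraints $168\pi<c<y-\tau$ and $1-2x<\tau<y$, which must be threaded through each case. In parts (i)--(iii) one must also check that nothing is double-counted between the middle (counter-example plus contributions from $\mathcal{Z}$) and the tail bound $r(z,\tau,c)$; because the latter is a genuine lower bound on the entire tail regardless of what zeros lie there, adding the $g_i(\eta,x)$ contribution on top is safe. With these checks in place, the theorem essentially reduces to slotting $r(z,\tau,c)$ (respectively $R(z,\tau,c)$) into the appropriate place of the already-completed argument of Theorem~\ref{main theorem zeta}.
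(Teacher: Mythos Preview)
Your proposal is correct and follows essentially the same approach as the paper. The paper's own proof is in fact considerably more terse than yours: for parts (i)--(iii) it simply says the arguments of Theorem~\ref{main theorem zeta} carry over once one accounts for the tail contribution via Proposition~\ref{zeros sum bound}, and for part (iv) it argues directly (rather than by contrapositive) that since the tail contributes at most $R(z,\tau,c)$, any shortfall below $\Re(v_{1,z})$ must be explained by a middle-range zero not accounted for in $\mathcal{Z}$.
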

\begin{proof}
    Parts (i)--(iii) follow directly from the arguments in Theorem 10, except that these bounds account for the contribution from zeros $\rho$ outside of the ordinate window $[y-\tau,y+\tau]$ for which we have zeros data.

    For part (iv), if $D(\mathcal{Z},z) + R(z,\tau,c) < \normalfont{\Re}\,(v_{1,z})$, then there necessarily are zeros whose (positive) contribution to $\normalfont{\Re}\,(v_{1,z})$ is not being accounted for. More explicitly, since
    \[
    \sum_{\substack{\rho\\ |\gamma-y|>\tau}}\normalfont{\Re} \left(\frac{1}{\rho-z}\right) \leq R(z,\tau,c),
    \]
    $R(z,\tau,c)$ already accounts for the maximum possible contribution from all zeros $\rho$ with $|\gamma - y| > \tau$. Therefore, any deficiency in contribution to $\Re(v_{1,z})$ must arise from some $\rho = \beta + i\gamma$ satisfying $|\gamma - y|\le \tau$ 
    that has not been already accounted for 
    in $\mathcal{Z}$.
\end{proof}

\begin{remark}
    It is possible that a further small improvement would be made by incorporating explicit zeros-density estimates, in addition to the explicit bounds on $S(u)$ and its integral that are already included. 
\end{remark}

\section{Numerical examples}\label{numerics}

The examples in this section are meant for illustration, to show how the method we described behaves 
in practice on representative examples.
The data in this section was obtained from \cite{lmfdb} and \cite{zeta_zeros}, 
and using LCALC~\cite{lcalc} as well as SageMath~\cite{sagemath}.
Our working assumption is that the zeros ordinates from \cite{lmfdb} and \cite{zeta_zeros} are accurate within $\pm 10^{-10}$, and the zeros ordinates obtained using \cite{lcalc} and \cite{sagemath} 
are accurate to within $\pm 10^{-8}$, though it is possible the accuracy is higher.
We used this assumption to determine the interval $[\gamma_-,\gamma_+]$ corresponding to each zero ordinate $\gamma$.
Numerical calculations were done using the interval arithmetic package in mpmath~\cite{mpmath}. We also used FLINT~\cite{flint} to compute the polygamma function when no exact value was available. The code for the implementation is available as a GitHub repository~\cite{github}.

\subsection{The Riemann zeta function} 
We used Theorem~\ref{main theorem zeta} and Theorem~\ref{zeta counterpart}
for verification using
$$y = 10^{28} + 501675.8,\qquad x=-2,\qquad \tau = 501575.4,\qquad c= y/2.$$
Our set $\mathcal{Z}$ was obtained from \cite{zeta_zeros}. For $z=x+iy$, 
and with the aid of Corollary~\ref{v1z} and Lemma~\ref{1st log deriv zeta}, applied with $K=10^7$, we computed
$$\Re(v_{1,z}) \in [31.418062627034752, 31.418062627034846],$$
$$D(\mathcal{Z},z) \in [31.417963253430945, 31.417963255019071],$$
$$r(z,\tau,c)\in [0.000099372589781012325291744466523344471948495 \pm  5*10^{-45}].$$
Based on this input data, Theorem~\ref{main theorem zeta}, part(i), succeeded in verifying the RH 
for $\eta=224$, and Theorem~\ref{zeta counterpart}, part (i), succeeded
verifying the RH for $\eta=70216$, which is much larger and contains 
$1399910$ zeros of the zeta function. Theorem~\ref{zeta counterpart}, part (iii), also succeeded in 
verifying that completeness of 
the subset 
$$\mathcal{Z}\cap [y-\eta,y+\eta],\qquad \eta=49650,$$ 
a window that contains $989881$ zeros.
In the opposite direction, we applied Theorem~\ref{zeta counterpart}, part (iv), to 
the subset $\mathcal{Z}_0$, which is the same as $\mathcal{Z}$ except 
the subinterval $[\gamma_-,\gamma_+]$ corresponding to the ordinate
$\gamma = 10^{28} + 521738.816$
was removed. We computed
$$D(\mathcal{Z}_0,z) \in [31.417963247220145, 31.417963248808271],$$
$$R(z,\tau,c)\in [0.00009937291681087140202410471243137201884323 \pm 10^{-44}].$$
Based on this input data, 
Theorem~\ref{zeta counterpart} succeeded in proving that the set $\mathcal{Z}_0$ was indeed incomplete.

\subsection{Real Dirichlet $L$-function}
Let $d$ be a fundamental discriminant, 
$\chi_d$ be the corresponding real primitive character, and 
 $L(s,\chi_d)$ the corresponding Dirichlet $L$-function.
In the notation of \S{\ref{lfunction case}}, we have $r=1$, $N=d$, $\epsilon=1$, and 
if $d<0$ then $\mu_1=1$. 
We applied Theorem~\ref{main theorem l}, part (i),
to verify the RH using
$$d=-1159523,\qquad \delta = -1,\qquad \tau = 1692.8.$$ 
The coefficients arising from the Euler product are given by
$\alpha_{p,1}=\chi_d(p)$.
Our set $\mathcal{Z}$ was obtained using \cite{lcalc}. 
With the aid of Corollary~\ref{k=1 corollary l} 
as well as Lemma~\ref{1st log deriv 1} applied with $K=10^5$ 
we computed
$$w_{1,\delta} \in [6.4702225452, 6.4702573982],$$ 
$$C(\mathcal{Z},\delta) \in [6.4644405451, 6.4644405588].$$
Based on this input data, Theorem~\ref{main theorem l} succeeded in verifying the RH 
for $L(s,\chi_d)$ for $\eta=32$, a window containing $74$ zeros with nonnegative ordinates.

\subsection{The Ramanujan $\tau$ $L$-function}
Let $\tau$ be the Ramanujan tau function\footnote{So, $\tau(1) =1, \tau(2) = -24, \tau(3) = 252, \tau(4) = -1472,\ldots$.}, 
and let $L(s)$ be the Ramanujan tau $L$-function.\footnote{Therefore, 
$L(s)$ is given by the Dirichlet series
$L(s) = 1 +a_2 2^{-s}+ a_3 3^{-s}+a_4 4^{-s}+\cdots$ where $a_n=\tau(n) n^{-11/2}$,
at least when $\sigma > 1/2$.} 
In the notation of \S{\ref{lfunction case}}, we have $r=2$, $N=1$, $\epsilon=1$, $\mu_1=11/2$, and $\mu_2=13/2$.
We applied Theorem~\ref{main theorem l}
to verify the RH using
$$\delta = -1,\qquad \tau = 9877.3.$$ 
The coefficients $\alpha_{1,p}$ and $\alpha_{2,p}$ arising from the Euler product for $L(s)$ 
are given by the roots of the polynomial $x^2-\tau(p)p^{-11/2}x + 1$.  
Our set $\mathcal{Z}$ was obtained using \cite{lcalc} and \cite{lmfdb}. 
With the aid of Corollary~\ref{k=1 corollary l} 
and Lemma~\ref{1st log deriv 1}, applied with $K=10^5$, 
we computed
$$w_{1,\delta} \in [0.1671717623, 0.1672414682],$$ 
$$C(\mathcal{Z},\delta) \in [0.1663983945, 0.1663983946].$$
Based on this input data, Theorem~\ref{main theorem l} succeeded in verifying the RH for $L(s)$
for $\eta=84$, a window which includes $46$ zeros with nonnegative ordinates.

\subsection{Elliptic curve $L$-function}
Let $E$ be an elliptic curve over $\mathbb{Q}$ of conductor $N=\Delta_E$.
Let $L(s,E)$ be the corresponding elliptic curve $L$-function.
In the notation of \S{\ref{lfunction case}}, we have $r=2$, $N=\Delta_E$, $\epsilon= 1$ or $\epsilon=i$, 
$\mu_1=1/2$, and $\mu_2=3/2$.
We applied Theorem~\ref{main theorem l} with
$$\delta = -1,\qquad \tau = 90,$$ 
to verify the RH for the elliptic curve $E$ with 
minimal Weierstrass equation
\begin{equation}\label{E eq}
    E: y^2+y=x^3-x.
\end{equation}
According to \cite[37.a1]{lmfdb}, $E$ has conductor $37$ so that $N=37$, 
and the sign of the functional equation of $L(s,E)$ is $-1$ so that, 
in the notation of \S{\ref{lfunction case}}, $\epsilon= i$. 
To calculate the Euler factors of $L(s,E)$, let
$|E(\mathbb{F}_p)|$ denote 
the number of solutions $(x,y)\in \mathbb{F}_p\times \mathbb{F}_p$
that satisfy the minimal Weierstrass equation \eqref{E eq}
together with the point at infinity that lies on $E$. Define
\begin{equation}
b(p):=p+1-|E(\mathbb{F}_p)|.
\end{equation}
Then
the coefficients $\alpha_{1,p}$ and $\alpha_{2,p}$ arising from the Euler product for $L(s,E)$ 
are the roots of the polynomial $x^2-b(p)p^{-1/2}x + 1$, provided $p\ne 37$.
If $p=37$, then $\alpha_{p,1}=-1/\sqrt{p}$ and $\alpha_{p,2}=0$.
Our set $\mathcal{Z}$ was obtained from \cite{lmfdb}. 
With the aid of Corollary~\ref{k=1 corollary l} 
and Lemma~\ref{1st log deriv 1}, applied with $K=10^5$, 
we computed
$$w_{1,\delta} \in [1.2186382841, 1.21870798992],$$ 
$$C(\mathcal{Z},\delta) \in [1.160632197991927, 1.160632199964985].$$
Based on this input data, Theorem~\ref{main theorem l} succeeded in verifying the RH for $L(s,E)$
for $\eta=10$, a window which contains $5$ zeros with nonnegative ordinates.

\section{Proof of Proposition~\ref{zeros sum bound}}\label{proposition 12 proof}
Recall the function $\phi$ defined in \eqref{phi def}. 
For $\rho = \beta+i\gamma$, we have
$$\Re\, \left[\frac{1}{\rho-z}+ \frac{1}{1-\overline{\rho}-z}\right] = \phi(\beta,\gamma-y,x).$$
Also, if $|\gamma-y|>\tau>1-x$, then Lemma~\ref{phi lemma}, parts (ii--iii) give
\begin{equation}\label{phi bounds}
    \phi(0,\gamma-y,x)\le \phi(\beta,\gamma-y,x) \le \phi(1/2,\gamma-y,x).
\end{equation}
Now, for $u\ge 0$ let $N(u)$ be the number of zeta zeros with ordinates in $[0,u]$, and extend $N(u)$ to $u<0$ by requiring it to be odd. Using Stieltjes integrals, we define
\begin{align*}
    L(z,\tau)&:=\int_{|u-y|>\tau} \phi(0,u-y,x)\,\frac{dN(u)}{2},\\
    U(z,\tau)&:= \int_{|u-y|>\tau} \phi(1/2,u-y,x)\,\frac{dN(u)}{2}.
\end{align*}
The double inequality \eqref{phi bounds} thus gives
\begin{equation}\label{L U bounds}
    L(z,\tau)\le \sum_{\substack{\rho\\ |\gamma-y|>\tau}}\Re\, \frac{1}{\rho-z}
\le U(z,\tau).
\end{equation}
We bound $L$ and $U$ from below and above, respectively,  
starting with $L$. 

To this end, since the integrand in $L$ is nonnegative, a lower bound on $L$
can be obtained by restricting the integration interval to $\tau<|u-y|<y-c$. 
Doing so, followed by the change of variable $u\leftarrow u-y$, gives
\begin{equation}\label{L 1}
 L(z,\tau) \ge \frac{1}{2}\cdot \int_{\tau<u<y-c} \phi(0,u,x)\,d[N(y+u)-N(y-u)].    
\end{equation}
On the other hand, it is known \cite{davenport_1967} that 
\begin{equation}\label{N formula}
  N(u) = \frac{1}{\pi}\theta(u)+1+S(u),  
\end{equation}
where $\theta(u)= \arg [\pi^{-iu/2}\Gamma(1/4+iu/2)]$ and,
if $u$ does not coincide with the ordinate of 
any nontrivial zero, $S(u)=\pi^{-1}\arg \zeta(1/2+iu)$.\footnote{The arguments are defined by a continuous variation starting at $s=2$, going up vertically to $s=2+iu$, and then horizontally to $s=1/2+iu$.} Also, $\theta(u)$ is a smooth odd function and $S(u)$ is right-continuous with 
jump discontinuities at the zeros ordinates. Thus, combining \eqref{L 1} and \eqref{N formula}, and defining
\begin{align}
    L_{\theta}(z,\tau)&:=\int_{\tau<u<y-c} \phi(0,u,x)\,(\theta'(y+u)+\theta'(y-u))\,du,\label{L theta}\\ 
    L_S(z,\tau)&:=\int_{\tau<u<y-c} \phi(0,u,x)\,d[S(y+u)-S(y-u)],\label{L S}
\end{align}
where $\theta'$ is the derivative of $\theta$ with respect to $u$, 
we obtain
\begin{equation}\label{L 2}
L(z,\tau) \ge \frac{1}{2\pi} L_{\theta}(z,\tau)- \frac{1}{2} |L_S(z,\tau)|,
\end{equation}

We first bound $L_{\theta}$ from below.
By \cite[Lemma 10]{lehman_1970}, if $u>0$, then 
\begin{equation}\label{theta' ineq}
    \left|\theta'(u)- \frac{1}{2}\log \frac{u}{2\pi}\right|\le \frac{4\pi^2\cdot 0.006}{u^2}.
\end{equation}
So, in view of \eqref{L theta}, we are led to consider
\begin{equation}\label{log diff}
    \frac{1}{2}\log\frac{y+u}{2\pi}+\frac{1}{2}\log\frac{y-u}{2\pi}= \log\frac{y}{2\pi} +\frac{1}{2}\log \left(1-\frac{u^2}{y^2}\right).
\end{equation}
Expanding the right-side in \eqref{log diff} about $u=0$, and using the Lagrange form of the remainder, as well as \eqref{theta' ineq}, we see that for $\tau< u< y-c$, 
\begin{align}\label{theta' bound}
\left|\theta'(y+u)+\theta'(y-u) -\log\frac{y}{2\pi}\right|\le \frac{u^2}{y^2-u^2}+\epsilon_1(y,\tau,c),
\end{align}
where
\begin{equation*}
    \epsilon_1(y,\tau,c)=4\pi^2\cdot 0.006\cdot \left[\frac{1}{(y+\tau)^2}+\frac{1}{c^2}\right].
\end{equation*}

On the other hand, substituting the following simple bound into the integral in \eqref{phi ineq 2} below,
\begin{equation}\label{phi bound 3}
    0\le \phi(0,u,x) \le \frac{1-2x}{u^2},
\end{equation} 
and evaluating the resulting integral in closed-form gives the inequality
\begin{equation}\label{phi ineq 2}
    0\le \int_{\tau}^{y-c} \phi(0,u,x)\cdot \frac{u^2}{y^2-u^2}\,du \le \epsilon_2(z,c),
\end{equation}
where
\begin{equation*}
  \epsilon_2(z,c)=\frac{1-2x}{2y}\cdot \log\frac{2y}{c}.  
\end{equation*}
Additionally, using the anti-derivative formula
\begin{align}\label{1 integral}
     \int \phi(0,u,x)\,du = \arctan\left(\frac{u}{1-x}\right)-\arctan\left(\frac{u}{x}\right),
\end{align}
together with the following double inequality (from the Laurent series for $\arctan$), which is valid for $u>\tau>1-x$,
\begin{equation}\label{arctan bound 1}
0 \le    \left[\arctan\left(\frac{u}{1-x}\right)-\arctan\left(\frac{u}{x}\right)\right]-\left[\pi - \frac{1-2x}{u}\right]
\le \frac{(1-x)^3-x^3}{3u^3},
\end{equation}
we obtain
\begin{equation}\label{eps 2 bound}
\int_{\tau}^{y-c} \phi(0,u,x)\,du \ge \frac{1-2x}{\tau}- \epsilon_3(z,\tau,c),   
\end{equation}
where, after using the elementary inequality $(1-x)^3-x^3<(1-x)^2(1-2x)$,
\begin{equation*}
\epsilon_3(z,\tau,c)= \left[\frac{(1-x)^2}{3\tau^3}+\frac{1}{y-c}\right]\cdot (1-2x).    
\end{equation*}
Therefore, combining \eqref{L theta}, \eqref{theta' bound}, \eqref{phi ineq 2},
and \eqref{eps 2 bound}, we obtain
\begin{equation}\label{L theta bound}
    L_{\theta}(z,\tau) \ge \left(\frac{1-2x}{\tau}-\epsilon_3\right) \left(\log \frac{y}{2\pi}-\epsilon_1 \right)-\epsilon_2.
\end{equation}

We now calculate an upper bound on $L_S$, which is defined in \eqref{L S}. 
Let $\phi'$ denote the derivative of $\phi$ with respect to $u$. 
Using integration by parts and Lemma~\ref{phi lemma}, part (iv), together with the intermediate value theorem, we obtain
\begin{align*}
    \left|L_S(z,\tau)\right|&\le  2\,\left[\phi(0,\tau,x)+\phi(0,y-c,x)\right]\cdot \sup_{c<u<2y}|S(u)|\\
 &\quad -2\,\phi'(0,\tau,x)\cdot \sup_{c<u_1<u_2<2y}\left|\int_{u_1}^{u_2}S(u)\,du\right|.
\end{align*}
If $c> e$, then \cite[Theorem 1]{trudgian_2014} gives the bound $|S(u)| \le \ell(u)$. And 
if $c>168\pi$, then \cite[Theorem 2.2]{trudgian_2011} gives the bound $|\int_{u_0}^{u}S(t)\,t|\le \ell_1(u)$. 
So, using the simple bound \eqref{phi bound 3} for $\phi$ as well as the bound
\begin{equation}\label{phi' bound}
    \frac{-2+4x}{u^3}\le \phi'(\beta,u,x)\le 0,
\end{equation}
valid for $0\le \beta\le 1$, we obtain
\begin{equation}\label{L S bound}
    \left|L_S(z,\tau)\right|\le \epsilon_4(z,\tau,c) + \epsilon_5(z,\tau),
\end{equation}
where 
\begin{align*}
&\epsilon_4(z,\tau,c) = \left(\frac{2-4x}{\tau^2}+\frac{2-4x}{(y-c)^2}\right)\cdot \ell(2y), \\ 
&\epsilon_5(z,\tau) = \frac{4-8x}{\tau^3} \cdot \ell_1(2y).
\end{align*}
Combining \eqref{L 2}, \eqref{L theta bound}, \eqref{L S bound} yields
the lower bound in the proposition.

To derive the upper bound in the proposition, we bound $U(z,\tau)$ in \eqref{L U bounds} from above. 
Let us write $U(z,\tau) = I_1(z,\tau)+I_2(z,\tau)$ where
\begin{align*}
    I_1(z,\tau)&:= \int_{\tau<|u-y|\le y-c} \phi(1/2,u-y,x)\,\frac{dN(u)}{2},\\
    I_2(z,\tau) &:=  \int_{|u-y|> y-c} \phi(1/2,u-y,x)\,\frac{dN(u)}{2}.
\end{align*}
$I_1$ is estimated by an analogous calculation to that used for $L(z,\tau)$.
The difference is that the formula \eqref{1 integral} and the double inequality \eqref{arctan bound 1}  are replaced with
 the formula
\begin{equation*}
\int \phi(1/2,u,x)\,du = 2\arctan\left(\frac{2u}{1-2x}\right),
\end{equation*}
and the following double inequality, valid for $u>\tau>1-2x$,
\begin{equation*}
    0 \le    2\arctan\left(\frac{2u}{1-2x}\right)-\left[\pi - \frac{1-2x}{u}\right]
\le \frac{(1-2x)^3}{12u^3}.
\end{equation*}
Consequently,  the formula \eqref{eps 2 bound} is replaced with
\begin{equation*}
    \int_{\tau}^{y-c} \phi(1/2,u,x)\,du \le \frac{1-2x}{\tau}. 
\end{equation*}
So that the term $\epsilon_3$ in \eqref{L theta bound} may be replaced with zero. 
Also, since we are looking for an upper bound, the $-$ signs in \eqref{L theta bound} should be replaced 
with $+$ signs. Put together,
\begin{equation}\label{I1 bound}
    I_1(z,\tau) \le \frac{1-2x}{2\pi \tau} \left(\log \frac{y}{2\pi}+\epsilon_1 \right)+\frac{\epsilon_2}{2\pi}+\frac{\epsilon_4+\epsilon_5}{2}.
\end{equation}

Next, we bound $I_2$. After  the change of variable $u\leftarrow u-y$ we obtain
\begin{equation*}
    I_2(z,\tau) = \frac{1}{2}\int_{y-c}^{\infty} \phi(1/2,u,x)\,dN(y+u) - 
    \frac{1}{2}\int_{y-c}^{\infty} \phi(1/2,u,x)\,dN(y-u).
\end{equation*}
Using integration by parts together with the observation $\phi(1/2,u,x)\ll 1/u^2$ 
and the facts that $N(u)\ll u\log u$ and non-decreasing, we obtain
\begin{equation}\label{I2 bound}
    I_2(z,\tau) \le \int_{y-c}^{\infty} \left|\phi'(1/2,u,x)\right| N(y+u)\,du.
\end{equation}
Therefore, on substituting the bound on $\phi'$ given in \eqref{phi' bound} and the bound 
$$|N(u)|\le \frac{u}{2\pi}\log\frac{u}{2\pi},\qquad  (u\ge \gamma_1)$$ 
which follows from e.g.\ \cite[Corollary 1]{trudgian_2014}, we obtain after a small calculation
$$I_2(z,\tau)\le \epsilon_6(z,c),$$
where $\epsilon_6(z,c)$ is  defined as in the statement of the proposition. 
The claimed upper bound then follows on combining this with \eqref{I1 bound}.

\section{Proof of Lemma~\ref{phi lemma}}\label{lemma 6 proof}

\noindent
\textit{Proof of parts (i)--(iii):} Taking the partial derivative of $\phi$ with respect to $\beta$, we find with the aid of a computer algebra system that
\begin{equation}\label{phi beta}
    \frac{\partial}{\partial \beta} \phi(\beta,\eta,x) = 
    -\frac{(2\beta - 1)(2x-1)G(\beta,\eta,x)}{[((\beta-x)^2+\eta^2)((1-\beta-x)^2 + \eta^2)]^2},
\end{equation}
where $G$ is a degree $4$ monic polynomial in $\beta$,
\begin{align}\label{G poly}
     G(\beta,\eta,x) :=&\, \beta^4 - 2\beta^3 + (1 - 2\eta^2 + 2x - 2x^2)\beta^2 + 2(\eta^2 - x + x^2)\beta \\
     &+ \eta^2(2x - 2x^2 - 1 - 3\eta^2) + x^2(1 - 2x + x^2).\nonumber
\end{align}
satisfying $G(\beta,\eta,x)=G(1-\beta,\eta,x)$.
Note that the sign of the partial derivative of $\phi$ with respect to $\beta$ 
is the same as the sign of $(2\beta-1)G(\beta,\eta,x)$.

We have the formulas
\begin{align}
   G(\beta,0,x) &= (\beta-x)^2(1-\beta-x)^2, \label{0eta}\\
 \frac{\partial}{\partial\beta} G(\beta,\eta,x) &= 2(2\beta-1) (\beta-r_+)(\beta-r_-),     \label{deriv_beta}
\end{align}
where 
\begin{equation} \label{rpm def}
    r_{\pm} := \frac{1 \pm \sqrt{4\eta^2 + (2x-1)^2}}{2} \qquad \text{so that}\qquad r_- < 0 \text{ and } 1 < r_+,
\end{equation}
as well as the formulas
\begin{align}
     \left[\frac{\partial^2}{\partial\beta^2} G(\beta,\eta,x) \right]_{\beta=\frac{1}{2}}  &=
    -4\eta^2 - (2x-1)^2,\label{2deriv_beta1/2}\\
    \left[\frac{\partial^2}{\partial\beta^2} G(\beta,\eta,x) \right]_{\beta=r_{\pm}}  &=
    8\eta^2 + 2 (2x-1)^2.\label{2deriv_betarpm}
\end{align}

    Taking \eqref{0eta} as our ``starting point" in some sense, and viewing it as a function of $\beta$, we see that
    $G(\beta,0,x)$ has two local minima (and $\beta$-axis intercepts) at $\beta = x$ and $\beta = 1-x$ with a (positive) local maximum at $\beta = 1/2$. As $\eta>0$ increases, we see from \eqref{deriv_beta} and \eqref{rpm def} together with \eqref{2deriv_betarpm}
     that the two local minima locations $r_{\pm}$ move away from $1/2$.
     In comparison, as follows from \eqref{deriv_beta} and \eqref{2deriv_beta1/2}, 
     $\beta = 1/2$ remains a local maximum of $G(\beta,\eta,x)$ (and a global maximum on the $\beta$-interval $[0,1]$),
     albeit with a monotonically decreasing value of $G(1/2,\eta,x)$. The latter claim can be seen from the negativity of the partial derivative
     \begin{equation}\label{eta deriv}
         \frac{\partial}{\partial\eta}G(\beta,\eta,x) = -\eta\left(12\eta^2 + (2\beta-1)^2 + (2x-1)^2\right).
     \end{equation}
     We now consider three cases.\\

    \noindent 
        \underline{Case (1)}: Suppose $G(\beta,\eta,x) > 0$ on the $\beta$-interval $(0,1)$, so that, by
        considering the sign of $\partial \phi/\partial \beta$ in \eqref{phi beta}, we find
        $\beta=1/2$ is a local minimum of $\phi(\beta,\eta,x)$. 
        In evaluating this case, we note by the negativity of the partial derivative in \eqref{eta deriv}, 
        if $\eta$ and $x$ are such that $\beta = 0$  is a root of $G(\beta,\eta,x)$, then $\beta=0$ cannot 
        be a root for any greater $\eta$ (with the same $x$). Also, $\beta=0$ is a root if and only if 
        the constant term of the polynomial 
        $G(\beta,\eta,x)$ in \eqref{G poly} is $0$, hence if and only if 
        $3\eta^4 + (2x^2-2x+1)\eta^2 + x^2(-x^2+2x-1)=0$. Solving for $\eta$ and 
        recalling the discussion following \eqref{2deriv_betarpm}, we therefore
        find $G(\beta,\eta,x) > 0$ throughout $\beta\in (0,1)$ if and only if 
        \[
            \eta \leq \sqrt{\frac{-2x^2+2x-1 + \sqrt{16x^4-32x^3+20x^2-4x+1}}{6}}=:v(x).
        \]
        For such $\eta$, $\phi(\beta,\eta,x)$ has no local extrema in the interval $\beta\in (0,1)$ 
        except at $\beta=1/2$ where it 
        has a local minimum. Thus, if $0\le \beta\le 1$, then $\phi(\beta,\eta,x)$ is minimized at $\beta=1/2$ in this case.\\

        \noindent
        \underline{Case (2)}: Suppose $G(\beta,\eta,x) < 0$ on the $\beta$-interval $(0,1)$, 
        so that, by considering the sign of $\partial \phi/\partial \beta$ in \eqref{phi beta}, we find
        $\beta=1/2$ is a local maximum of $\phi(\beta,\eta,x)$. 
        On the other hand, $G(\beta,\eta,x)$ maintains a negative sign throughout $\beta\in (0,1)$ if and only if 
        the local extremum of $G(\beta,\eta,x)$ that occurs at $\beta=1/2$ is negative. 
        By direct calculation, we have
        $$G(1/2,\eta,x)=    \frac{1}{16}(1-2x)^4-\frac{1}{16}(1-2x)^4\eta^2-3\eta^4.$$
        Setting $G(1/2,\eta,x) = 0$ and solving for $\eta$, we find that $G(\beta,\eta,x) < 0$ throughout $\beta\in (0,1)$ if and only if $$\eta > \frac{1-2x}{2\sqrt{3}}.$$
        For such $\eta$, $\phi(\beta,\eta,x)$ has no local extrema in the interval $\beta\in (0,1)$ except 
        at $\beta=1/2$ where it 
        has a local maximum. Thus, if $0\le \beta\le 1$, then 
        $\phi(\beta,\eta,x)$ is minimized at the boundary points $\beta=0,1$ and maximized at $\beta=1/2$ in this case.\\
        

        \noindent
        \underline{Case (3)}: Suppose $G(\beta,\eta,x)$ does not maintain its sign throughtout the $\beta$-interval $(0,1)$, so that
        $G(\beta,\eta,x)$ has at least one root at some $\beta\in(0,1)$. By the symmetry of $G(\beta,\eta,x)$ about $\beta=1/2$
        as well as the discussion following \eqref{2deriv_betarpm}, there can be at most two such roots, one
        in the subinterval $(0,1/2]$ and another symmetric root in the subinterval $[1/2,1)$.
        By the work done thus far, such roots occur if and only if
        \[
        v(x) < \eta \le \frac{1-2x}{2\sqrt{3}}.
        \]
        When these roots occur, they each correspond to local maxima of $\phi(\beta,\eta,x)$ as seen by considering the sign of $\partial\phi/\partial\beta$ in \eqref{phi beta}. Therefore in this case, $\phi(\beta,\eta,x)$ is minimized either at the boundary $\beta = 0$ (equivalently $\beta = 1$), or at the center $\beta =1/2$. To find the point of transition between these two situations, we set $\phi(0,\eta,x) = \phi\left(1/2,\eta,x\right)$ and find that the transition 
        occurs when
        \[
        \eta = \sqrt{\frac{x(x-1)}{3}}.
        \]
    
    \vspace{2mm}    
     \noindent
        \underline{Summary}: From the above 3 cases, we have the following behavior of the minimum of 
        $\phi(\beta,\eta,x)$ for $\beta\in [0,1]$.
    \begin{enumerate}
        \item[(a)] If $\eta \le \sqrt{\frac{x(x-1)}{3}}$, then $\phi(\beta,\eta,x)$ is minimized at $\beta=\frac12$
        \item[(b)] If $\eta > \sqrt{\frac{x(x-1)}{3}}$, then $\phi(\beta,\eta,x)$ is minimized at $\beta=0$ (equivalently $\beta=1$).
    \end{enumerate}
    This covers claims (i) and (ii) in the statement of the lemma. Claim (iii) in the lemma 
    follows from Case (2) above.\\

\noindent
    \textit{Proof of part (iv):}
    Writing $\phi'(\beta,u,x)$ as the partial derivative of $\phi(\beta,u,x)$ with respect to $u$ and $\phi''(\beta,u,x)$ similarly. 
    We have
    \begin{align}
    \phi'(\beta,u,x) &= \frac{2(x-\beta)u}{[(x-\beta)^2 + u^2]^2} + \frac{2(x-(1-\beta))u}{[(x-(1-\beta))^2 + u^2]^2}\label{phi-1du}.
    \end{align}
    Since $x\le 0$, $\beta\in [0,1]$, and $u > 0$, by \eqref{phi-1du} we have that $\phi'(\beta,u,x) < 0$ for any such $\beta,u$, and $x$.
    
    For the claims regarding the increasing nature of $\phi'(\beta,u,x)$, we begin by evaluating $\phi''$ at $\beta = 1/2$ and find
    \[
    \phi''(1/2,u,x) = \frac{2(1-2x)[3u^2 - (\frac12 - x)^2]}{[(\frac12-x)^2 + u^2]^3}.
    \]
    From this, we see that $\phi'(1/2,u,x)$ is increasing if and only if 
    \[
    u > \frac{\frac12 - x}{\sqrt3} = \frac{1-2x}{2\sqrt3},
    \]
    yielding the first of these claims.

    Similarly, evaluating $\phi''$ at $\beta=0$ gives us
    \begin{equation}\label{phi-0b-2d}
        \phi''(0,u,x) = \frac{2x(x^2-3u^2)}{(x^2+u^2)^3} + \frac{2(x-1)((x-1)^2-3u^2)}{((x-1)^2+u^2)^3}.
    \end{equation}
    The first term in \eqref{phi-0b-2d} is positive only when $u > |x|/\sqrt3$ and the second term only when $u > |x-1|/\sqrt3$. 
    So, since $x\le 0$, $\phi'(0,u,x)$ is increasing when 
    \begin{equation*}\label{elem u cond}
      u > \frac{1-x}{\sqrt3} = \frac{2-2x}{2\sqrt3}.  
    \end{equation*}
    

\section{Conclusions and future directions}
We presented a method to verify the RH for zeta at large heights and to verify the RH for a general class of $L$-functions at low heights. The method is simple to understand and implement and we demonstrated its efficacy on a variety of $L$-functions using interval arithmetic. We also presented a significant improvement to the method in the case of zeta by incorporating explicit bounds on $S(t)$ and integrals of $S(t)$.

In forthcoming work, we will develop and detail further generalizations of this verification method. These generalizations include, among other things, consideration of $w_{k,\delta}$ and $v_{k,z}$ when  $k>1$, further improvements in the case of zeta at large heights, the special case of Dirichlet $L$-functions to real primitive characters, as well as the extending of the improvements in \S{\ref{improvements}} to 
a more general setting.

\section*{Acknowledgements}
We are grateful to Georg-August-Universit\"at G\"ottingen,
Nieders\"achsische Staats- und Universit\"atsbibliothek G\"ottingen, for helping us locate some of
Riemann's unpublished notes and giving us permission to reproduce them in this paper. 
Megan Kyi thanks the OH5-OSU SURE Undergraduate Research program at the Ohio State University, Columbus, for 
their support.


\printbibliography
\end{document}